\documentclass[12pt,reqno]{amsart}
\oddsidemargin = 1.5cm \evensidemargin = 1.5cm \textwidth =9in
\textheight =7.5in
\usepackage{palatino}
\usepackage{amsfonts,amssymb,latexsym,amsmath, amsxtra, mathrsfs}
\usepackage{verbatim}
\usepackage{diagbox}

\pagestyle{myheadings}

\textheight=8.6 true in \textwidth=6.5 true in \hoffset=-0.8true in


\theoremstyle{plain}
\newtheorem{theorem}{Theorem}[section]
\newtheorem{corollary}[theorem]{Corollary}

\newtheorem{conjecture}[theorem]{Conjecture}

\newtheorem{proposition}[theorem]{Proposition}

\newtheorem*{theorem*}{Theorem}
\theoremstyle{definition}

\theoremstyle{remark}
\newtheorem*{remark}{Remark}
\newtheorem*{remarks}{Remarks}

\numberwithin{equation}{section}

\newcommand{\R}{\mathbb R}
\newcommand{\N}{\mathbb N}
\newcommand{\Z}{\mathbb Z}
\newcommand{\C}{\mathbb C}

\newcommand{\Q}{{\mathbb Q}}

\def\sS{\mathcal S}

\def\rank{{\rm rank}}
\def\crank{{\rm crank}}



\def\({\left(}
\def\){\right)}
\def\<{\left<}
\def\>{\right>}

\newcommand{\SL}{\text{SL}}

\newcommand{\abs}[1]{\left|#1\right|}

\newcommand{\Log}{{\rm Log}}

\def\cU{\mathcal U}

\def\cQ{\mathcal Q}

\def\rk{{\rm rank}}

\def\ospt{{\rm ospt}}

\renewcommand{\b}[1]{\boldsymbol{#1}}

\renewcommand{\pmod}[1]{\  \,  \left(  \mathrm{mod} \,  #1 \right)}

\begin{document}
\allowdisplaybreaks

\title[Peak Positions of Unimodal Sequences]{Peak positions of  strongly unimodal sequences}
\author[K. Bringmann]{Kathrin Bringmann}
\author[C. Jennings-Shaffer]{Chris  Jennings-Shaffer}
\author[K. Mahlburg]{Karl Mahlburg}
\author[R. Rhoades]{Robert Rhoades}

\address{Mathematical Institute, University of Cologne, Weyertal 86-90, 50931 Cologne, Germany}
\email{kbringma@math.uni-koeln.de}

\address{Mathematical Institute, University of Cologne, Weyertal 86-90, 50931 Cologne, Germany}
\email{cjenning@math.uni-koeln.de}

\address{Department of Mathematics, Louisiana State University, Baton Rouge, LA 70803, USA}
\email{mahlburg@math.lsu.edu}

\address{Susquehanna International Group, Bala Cynwyd, PA 19004, USA}
\email{rob.rhoades@gmail.com}

\thanks{
	The research of the  first author is supported by the Alfried Krupp Prize for Young University Teachers of the Krupp foundation and the research leading to these results receives funding from the European Research Council under the European Union's Seventh Framework Programme (FP/2007-2013) / ERC Grant agreement n. 335220 - AQSER}
\date{\today}
\thispagestyle{empty} \vspace{.5cm}
\begin{abstract}
We study combinatorial and asymptotic properties of the rank of strongly unimodal sequences. We find a generating function for the rank enumeration function, and give a new combinatorial interpretation of the ospt-function introduced by Andrews, Chan, and Kim. We conjecture that the enumeration function for the number of unimodal sequences of a fixed size and varying rank is log-concave, and prove an asymptotic result in support of this conjecture. Finally, we determine the asymptotic behavior of the rank for strongly unimodal sequences, and prove that its values (when appropriately renormalized) are normally distributed with mean zero in the asymptotic limit.

\end{abstract}

\subjclass[2010] {05A16, 11B83, 11F03, 11F12, 60C05}

\keywords{strongly unimodal sequences; unimodal rank; partition rank; Wright's Circle Method}

\maketitle


\section{Introduction and Statement of Results}\label{sec:intro}

The study of combinatorial statistics for integer partitions has led to a vast number of interesting results, including bijective correspondences, congruences, asymptotic formulas and inequalities, limiting probabilistic distributions, and striking examples of modular forms
(and generalizations thereof). In this paper we consider many of
these questions for unimodal sequences, which have only recently been considered from a number-theoretic perspective.

\subsection{History of statistics for integer partitions}

We begin by recalling standard definitions from the theory of partitions \cite{And98}. 
A sequence of positive integers $\left( \lambda_j\right)_{j=1}^\ell$
is a {\it partition of size $n$} if it satisfies
\begin{equation*}
\lambda_1\ge \lambda_2 \ge \dotsc \ge \lambda_\ell,
\qquad\mbox{and}\quad
\lambda_1+\dots +\lambda_\ell=n.
\end{equation*}
If $\lambda$ is a partition, then we denote its size by $|\lambda|$, and its length by $\ell(\lambda) = \ell$.
We define $p(n)$ as the number of partitions of $n$.
For example, the partitions of $5$ are
$(5)$, $(4,1)$, $(3,2)$, $(3,1,1)$, $(2,2,1)$, $(2,1,1,1)$, and $(1,1,1,1,1)$,
so that $p(5)=7$. Euler's generating function for $p(n)$ is given by
\begin{align*}
P(q) &:= \sum_{n\ge0} p(n)q^n
=
\frac{1}{(q)_\infty},
\end{align*}
where $(a)_m = (a;q)_m := \prod_{j=0}^{m-1} (1-aq^j)$
for $m\in \N_0\cup\{\infty\}$.

Determining the growth of $p(n)$ was one of the early motivating problems in the theory of partitions, and Hardy and Ramanujan \cite{HR} developed the Circle Method in order to
provide an asymptotic series expansion for $p(n)$.
Their result includes the asymptotic main term
\begin{equation*}
p(n)\sim \frac{1}{4\sqrt{3}n}e^{\pi\sqrt{\frac{2n}{3}}}\quad
\textnormal{as }\quad  n\rightarrow\infty.
\end{equation*}
The proof intrinsically relies on the fact that $P(q)$ is essentially the inverse of Dedekind's $\eta$-function,
which is defined by
$\eta(\tau):= q^{\frac{1}{24}}(q)_\infty$ (throughout $q:=e^{2\pi i\tau}$), which is a weight $\frac12$ modular form (with multiplier).

Ramanujan \cite{R} also used modular forms to prove striking congruences for the partition function. He showed that if $n \in\N_0$, then
\begin{align}\label{E:Ram}
p(5n+4) \equiv 0 \pmod{5}, \quad
p(7n+5) \equiv 0 \pmod{7}, \quad
p(11n+6) \equiv 0 \pmod{11}.
\end{align}
Dyson \cite{Dys} introduced a more combinatorial approach to these congruences, where the partitions are decomposed into equal classes based on certain statistics. He defined the {\it rank} of a partition to be its largest part minus the number of its parts, so that
\begin{equation*}
\rank(\lambda) := \lambda_1 - \ell(\lambda).
\end{equation*}
Andrews and Garvan \cite{AG} later defined the {\it crank} of a partition as follows. Let $o(\lambda)$ denote the number of ones in $\lambda$ and
define $\mu(\lambda)$ to be the number of parts strictly larger than $o(\lambda)$. Then if $|\lambda| > 1$,
\[
\crank(\lambda):=\begin{cases}
\text{largest part of }\lambda\quad&\text{if }o(\lambda)=0,\\
\mu(\lambda)-o(\lambda)&\text{if }o(\lambda)>0.
\end{cases}
\]
For the empty partition, it is convenient to define $\rank(\emptyset) = \crank(\emptyset) := 1$. The crank of the partition $\lambda = 1$ does not follow the above definition, as described below in \eqref{E:Cgen}. As the focus of the present paper is not on divisibility properties, the Ramanujan congruences
are not discussed in the sequel; we simply mention here that the rank can be used to decompose the partitions counted by \eqref{E:Ram} into $k$ equal classes for $k\in \{5,7\}$ (this was conjectured in \cite{Dys}, and proven by Atkin and Swinnerton-Dyer \cite{ASD}), and the crank decomposes all three congruences from \eqref{E:Ram}. Dyson's conjecture for the crank famously required both defining the statistic and proving the partition decomposition; the first result is due to Garvan \cite{Ga}, who proved a decomposition using weighted ``vector partitions'', and subsequently Andrews and Garvan \cite{AG} defined the partition crank given above.

Let $N(m,n)$ denote the number of partitions of $n$ with rank $m$,
and similarly, for $n\in\Z\setminus\{1\}$, let $M(m,n)$ denote the number of partitions of $n$ with crank $m$ (for $n = 1$, the series below requires defining $M(\pm 1, 1) = 1$ and $M(0,1) = -1$). The generating functions are given as
\begin{align}
\label{E:Rgen}
R(w;q)
&:=
	\sum_{\substack{n\ge0 \\m\in\Z}}N(m,n)w^mq^n
	=
	\sum_{n\ge0}\frac{q^{n^2}}{(wq)_n (w^{-1}q)_n}
= \frac{1-w}{(q)_\infty} \sum_{n \in \Z} \frac{(-1)^{n}
q^{\frac{n(3n+1)}{2}}}{1-wq^n}
,\\
\label{E:Cgen}
C(w;q)
&:=
	\sum_{\substack{n\ge0 \\ m\in\Z}}M(m,n)w^mq^n
	=
	\frac{(q)_\infty}{(wq)_\infty (w^{-1}q)_\infty}
= \frac{1-w}{(q)_\infty} \sum_{n \in \Z} \frac{(-1)^{n}
q^{\frac{n(n+1)}{2}}}{1-wq^n}
.
\end{align}
The first series in \eqref{E:Rgen} is due to Garvan \cite[equation (7.2)]{Ga}, and the final expression was proven by Atkin and Swinnerton-Dyer \cite[Lemma 1]{ASD}.
The product in \eqref{E:Cgen} is in Andrews and Garvan's proof of \cite[Theorem 1]{AG}, and the series was proven by Garvan \cite[equation (7.15)]{Ga} (an equivalent identity also appeared in Ramanujan's Lost Notebook \cite[entry 3.2.1]{AB}).

As mentioned above, the modularity of $P(q)$ is extremely helpful in determining the asymptotic behavior of $p(n)$, and the situation is similar for the crank, as $C(w;q)$ is a modular form (up to a rational power of $q$) when $w$ is a root of unity \cite{Mah05}.
The first author and Ono showed \cite[Theorem 1.1]{BO10} that if $w$ is a root of unity, then $R(w;q)$ is a mock modular form (in the modern sense defined in \cite{Zag09}). As a consequence of these modularity properties, three of the present authors proved asymptotic formulas for the moments of these statistics. For even integers $2k$, the moments of the partition rank and crank are defined by
\begin{equation*}
N_{2k}(n) := \sum_{m \in \Z} m^{2k} N(m,n), \qquad
M_{2k}(n) := \sum_{m \in \Z} m^{2k} M(m,n).
\end{equation*}
Corollary 1.4 of \cite{BMR11} states that as $n \to \infty$,
\begin{equation}
\label{E:NMAsymp}
N_{2k}(n) \sim M_{2k}(n) \sim \frac{e^{\pi \sqrt{\frac{2n}{3}}}}{4\sqrt{3}n} \left(2^{2k}-2\right)\left|B_{2k}\right| (6n)^{k},
\end{equation}
where $B_{n}$ is the $n$-th Bernoulli number. The asymptotic formulas in \eqref{E:NMAsymp} were motivated by Garvan's conjecture \cite{Ga10} that $M_{2k}(n) > N_{2k}(n)$. An asymptotic version of these inequalities is implied by the main result in \cite{BMR11} (which refines \eqref{E:NMAsymp} to provide asymptotic series for the moments), and Garvan gave explicit, combinatorial generating functions for the full inequalities in \cite{Ga11}.

The asymptotic formulas for the moments are also of interest due to their probabilistic implications. Diaconis, Janson, and the fourth author \cite{DJR13} used the Method of Moments in order to show that these moments determine the limiting distribution (as $n \to \infty$) of the rank (and crank) function for a partition of $n$ chosen uniformly at random. Specifically, the main (unnumbered) proposition in \cite{DJR13} states that
\begin{equation}
\label{E:rankDist}
\lim_{n \to \infty} \frac{1}{p(n)} \left| \left\{ |\lambda| = n \; : \; \frac{\rank(\lambda)}{\sqrt{6n}} \leq x \right\} \right| \to F_r(x),
\end{equation}
where $F_r(x) := (1+e^{-\pi x})^{-1}$ is the difference between two independent extreme value distributions.  Moreover, the authors of \cite{DJR13} explained how \eqref{E:rankDist} is consistent with the heuristic formula obtained by assuming that $\lambda_1$ and $\ell(\lambda)$ have independent distributions (which must be identical due to the conjugation map for partitions). In particular, Erd\"os and Lehner's distributional limit for the largest part \cite[Theorem 1.1]{EL41} states that
\begin{equation}
\label{E:pkDist}
\lim_{n \to \infty} \frac{1}{p(n)} \left| \left\{ |\lambda| = n \; : \; \frac{\lambda_1 - A \sqrt{n} \log(A \sqrt{n})}{A\sqrt{n}} \leq x \right\} \right| \to e^{-e^{-x}},
\end{equation}
where $A := \frac{\sqrt{6}}{\pi}$, and $e^{-e^{-x}}$ is a standardized extreme value distribution (cf. \cite[page 195]{billingsley}).

\subsection{Unimodal sequences: History and combinatorial results}

We now consider unimodal sequences of integers, which have many similarities to partitions, and appear widely in enumerative combinatorics (see Stanley's survey article \cite{Stan89} for many examples and applications).
In particular, a sequence of positive integers $\{ a_j\}_{j=1}^s$
is a {\it strongly unimodal sequence of size $n$} (we use braces to distinguish from partitions) if it satisfies
\begin{equation}
\label{E:uniDef}
a_1<\dots< a_{k-1} < a_k> a_{k+1}  >\dots > a_s
\end{equation}
for some $k\in\mathbb N$
and $a_1+\dots +a_s=n$.  If $\sigma$ is a
strongly unimodal sequence, then we denote its size by $|\sigma|$, and for a given $n$, let $\cU(n)$ be the set of all strongly unimodal sequences such that $|\sigma| = n$.  We also denote the enumeration function for strongly unimodal sequences by
 $u(n) := \abs{\cU(n)}$.
As an example, $u(5)=6$, since the strongly unimodal sequences of size $5$ are
$\{5\}$, $\{1,4\}$, $\{4,1\}$, $\{2,3\}$, $\{3,2\}$, and $\{1,3,1\}$.
The generating function for $u(n)$ is given by \cite[page 68]{An11}
\begin{equation*}
U(q) := \sum_{n\geq 1} u(n) q^n = \sum_{n \geq 0 }  (-q)_n^2q^{n+1}.
\end{equation*}

To our knowledge, strongly unimodal sequences were first introduced by Andrews in \cite{An11}, where he used the terminology ``strictly convex compositions'' (with enumeration function $X_d(n)$). Andrews' main result \cite[equation (1.5)]{An11} expresses $U(q)$ in terms of certain mock theta functions. The study of other related sequences has a more extensive history. For example, a {\it unimodal sequence} of integers satisfies a modified version of \eqref{E:uniDef} where the inequalities must no longer be strict; various combinatorial and number-theoretic properties of unimodal sequences were proven in \cite{Aul51,Wri68}. See \cite{BM14JCTA} for the history of several other variants of unimodal sequences.

The fourth author \cite{rhoades} exploited a connection between $U(q)$
and mixed mock modular forms (linear combinations of modular forms multiplied by mock modular forms) using a
technique developed by the first and the third authors \cite{BM} in order to give an asymptotic series for $u(n)$ (note that the enumeration function for strongly unimodal sequences is denoted by $u^*(n)$ in \cite{rhoades}).  As a consequence one has for any $N\in\mathbb N_0$,
\begin{equation*}
u(n) = \frac{1}{8 \cdot 6^\frac14 n^\frac34} e^{\pi\sqrt{\frac{2n}{3}}} \left( 1+ \sum_{1\leq r\leq N}
\frac{\beta_r}{n^{\frac{r}{2}}} + O\left(n^{-\frac{N+1}{2}}\right)\right)
\end{equation*}
for explicitly computable $\beta_r$ (e.g.  $\beta_1 = -\frac{2\pi^2+9}{2^6  \sqrt{24} \pi}$).

The {\it rank} of a strongly unimodal sequence is the number of terms after the maximal term minus the number
of terms that precede it, i.e., in the notation above, the rank is $s-2k+1$.
By letting $w$ (resp. $w^{-1}$)
keep track of the terms after (resp. before)
a maximal term,  we have that $u(m,n)$, the number of
size $n$ and rank $m$ strongly unimodal sequences, satisfies (see \cite[equation (1.1)]{BOPR})
\begin{equation*}
U(w;q):=\sum_{\substack{n\geq 0 \\ m\in\mathbb Z}}u(m,n)w^mq^n=
\sum_{n\geq 0}(-wq)_n \left(-w^{-1}q\right)_n q^{n+1}.
\end{equation*}
\begin{remark}
The unimodal rank directly coincides with the partition rank for a different class of unimodal sequences. Section 2.2 of \cite{BM14JCTA} gives a bijection between partitions and ``receding stacks with summits'', which are certain unimodal sequences where repeated parts are allowed. In particular, the important feature to this discussion is that a partition $\lambda$ is mapped to a unimodal sequence $\sigma = \{a_1, \dots, a_{\ell(\lambda)}, b_{\lambda_1 - 1}, \dots, b_1\}$, where $a_{\ell(\lambda)}$ is the largest part. Thus
\begin{equation*}
\rank(\sigma) = (\lambda_1 - 1) - (\ell(\lambda) - 1) = \rank(\lambda).
\end{equation*}
\end{remark}

By definition, specializing the refined enumeration function to $w=1$ yields
$$ U(1;q)=U(q).$$
However, as with the partition rank generating function $R(w;q)$, the unimodal rank generating function also has interesting analytic properties at other roots of unity. For example, Bryson, Ono, Pitman, and the fourth author \cite{BOPR} showed that setting $w = \pm i$ gives the third order mock theta function
$$ U(\pm i;q) = \Psi(q):= \sum_{n \geq 1} \frac{q^{n^2}}{\left(q;q^2\right)_n}.$$

In this paper we are interested in combinatorial identities, asymptotic enumeration formulas, and the limiting probabilistic distribution of the rank statistic for strongly unimodal sequences. By symmetry, it is clear that
\begin{equation}\label{symmetry}
u(m,n) = u(-m,n).
\end{equation}
For this reason, we only consider $m\ge0$ throughout the article.
The following table gives the first few values of $u(m,n)$:
\begin{table}[h]
\label{T:umn}
\caption{Values of $u(m,n)$}
\begin{center}
\begin{tabular}{c || rrrrrrrrrrrrrrrrrrrr}
\diagbox[height=0.7cm, width = 0.7cm]{$\!\!\!m$}{$n\!\!$} & 1 & 2 & 3 & 4 & 5 & 6 & 7 & 8 & 9 & 10 & 11 & 12 & 13 & 14 & 15 & 16 & 17 & 18 & 19 & 20\\
\hline \hline
0 & 1& 1& 1 & 2 & 2 &4 & 5 & 7 & 10 & 13 & 17 & 24 & 31 & 40 & 53 & 69 & 88 & 113 & 144 & 183  \\
1 &&&1 & 1 & 2 & 2 & 4 & 5 & 7 & 10 & 14 & 18 & 25 & 33 & 43 & 56 & 73 & 94 & 121 & 153 \\
2&&&&&& 1 & 1 & 2 & 3 & 4 & 6 & 9 & 12 & 16 & 23 & 30 & 40 & 53 & 69 & 90\\
3 &&&&&&&&&& 1 & 1 & 2 & 3 & 5 & 6 & 10 & 13 & 19 & 25 & 34  \\
4 &&&&&&&&&&&&&&& 1 & 1 & 2 & 3 & 5 & 7\\

\end{tabular}
\end{center}
\end{table}


As an initial observation, we see that the first few non-zero values of $u(m,n)$ for fixed $m$
are equal to the values of the partition function. In particular, the table suggests that
$u(m, \frac12(m+2)(m+1) + n) = p(n)$ for $ 0 \le n \le m+1$. The following theorem explains this phenomenon.

\begin{theorem}\label{thm:fixedm}
We have the following generating function for $m\in\N_0$
\begin{equation}\label{defineUm}
U_m(q):=\sum_{n\geq  1} u(m, n) q^n = \frac{q^{\frac{m(m+1)}{2}}}{(q)_\infty}
\sum_{n\ge1} \frac{(-1)^n q^{\frac{n(n+1)}{2}+mn}}{1-q^{n+m}} \left(q^{n(n+m)}  -1\right).
\end{equation}
In particular, $u(m, \frac12(m+2)(m+1) + n) = p(n)$ for $ 0 \le n \le m+1$.
\end{theorem}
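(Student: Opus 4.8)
The plan is to compute $U_m(q)=[w^m]U(w;q)$ directly from the product form $U(w;q)=\sum_{n\ge0}(-wq)_n(-w^{-1}q)_n q^{n+1}$, and then to read off the ``$u(m,\cdot)=p(n)$'' statement from the lowest-order terms of the resulting series. First I would expand each finite product by the $q$-binomial theorem,
\[
(-wq)_n=\sum_{i=0}^{n}\binom{n}{i}_q q^{\binom{i+1}{2}}w^i,\qquad (-w^{-1}q)_n=\sum_{k=0}^{n}\binom{n}{k}_q q^{\binom{k+1}{2}}w^{-k},
\]
where $\binom{n}{k}_q$ is the Gaussian binomial coefficient, multiply them, and collect the coefficient of $w^m$ (so $i=k+m$). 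This yields the explicit triple sum
\[
U_m(q)=\sum_{n\ge0}q^{n+1}\sum_{k\ge0}\binom{n}{k+m}_q\binom{n}{k}_q q^{\binom{k+m+1}{2}+\binom{k+1}{2}}.
\]
On the target side, the apparent denominator in \eqref{defineUm} is only cosmetic: since $1-q^{n(n+m)}=(1-q^{n+m})\sum_{i=0}^{n-1}q^{i(n+m)}$, one has $\tfrac{q^{n(n+m)}-1}{1-q^{n+m}}=-\sum_{i=0}^{n-1}q^{i(n+m)}$, so the right-hand side of \eqref{defineUm} is the genuine power series
\begin{equation*}
U_m(q)=\frac{q^{\frac{m(m+1)}{2}}}{(q)_\infty}\sum_{n\ge1}\sum_{i=0}^{n-1}(-1)^{n+1}q^{\frac{n(n+1)}{2}+mn+i(n+m)}. \tag{$\star$}
\end{equation*}
Thus the theorem amounts to a Hecke-type double-sum identity.

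The core of the proof is to carry out the sum over $n$ in the triple sum and match it to $(\star)$. I would approach this by recognizing the inner sum $\beta_n:=\sum_k\binom{n}{k+m}_q\binom{n}{k}_q q^{\binom{k+m+1}{2}+\binom{k+1}{2}}$, up to normalization, as one half of a Bailey pair and applying Bailey's lemma, the appearance of the factor $(q)_\infty^{-1}$ being exactly the effect of iterating the Bailey chain; alternatively one can evaluate $\sum_{n}\binom{n}{k+m}_q\binom{n}{k}_q q^{n}$ in closed form by a ${}_2\phi_1$ summation and then resum in $k$, or instead first produce an Appell--Lerch/Hecke-type closed form for the full two-variable series $U(w;q)$ (as in the techniques of \cite{BOPR}) and extract $[w^m]$ afterwards. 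Producing the precise double sum of $(\star)$, in particular tracking the signs $(-1)^{n+1}$ and the quadratic exponents, is where the real work lies, and I expect this $q$-hypergeometric reduction to be the main obstacle.

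Granting $(\star)$, the ``in particular'' statement is short. Set $D_m(q):=\sum_{n\ge1}\sum_{i=0}^{n-1}(-1)^{n+1}q^{\frac{n(n+1)}{2}+mn+i(n+m)}$ and observe that the exponent $\frac{n(n+1)}{2}+mn+i(n+m)$ increases strictly in $i$ for fixed $n$ (by $n+m$), and at $i=0$ increases strictly in $n$ (by $n+m+1$). Hence the only exponents that are at most $2m+3$ are $m+1$ (from $n=1,\,i=0$, coefficient $+1$) and $2m+3$ (from $n=2,\,i=0$, coefficient $-1$), with nothing in between. Using $\frac{m(m+1)}{2}+(m+1)=\frac{(m+1)(m+2)}{2}$, this gives
\[
q^{\frac{m(m+1)}{2}}D_m(q)=q^{\frac{(m+1)(m+2)}{2}}\bigl(1-q^{m+2}+O(q^{2m+4})\bigr).
\]
Multiplying by $(q)_\infty^{-1}=\sum_{j\ge0}p(j)q^j$ and extracting the coefficient of $q^{\frac{(m+1)(m+2)}{2}+n}$ shows $u\!\left(m,\tfrac12(m+2)(m+1)+n\right)=p(n)$ for all $0\le n\le m+1$, since the correction $-q^{m+2}+\cdots$ only affects coefficients at relative degree $\ge m+2$.
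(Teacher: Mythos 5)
Your reduction of the right-hand side of \eqref{defineUm} to the power series $(\star)$ is correct, and your derivation of the ``in particular'' statement from $(\star)$ is also correct (indeed slightly sharper than what the paper records, since you track the $-q^{m+2}$ term explicitly). The problem is that the central claim of the theorem --- that $[w^m]U(w;q)$ equals $(\star)$ --- is never actually established. You set up both sides (the triple sum coming from the $q$-binomial expansion of $(-wq)_n(-w^{-1}q)_n$, and the Hecke-type double sum $(\star)$), and then list three possible strategies (Bailey pairs, a ${}_2\phi_1$ summation, or first finding an Appell--Lerch form for $U(w;q)$ and extracting $[w^m]$), but you carry out none of them and yourself flag this step as ``the main obstacle.'' That step \emph{is} the theorem, so as written the proof is incomplete.

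For comparison, the paper follows exactly your third alternative: it quotes the known Appell--Lerch representation
\begin{equation*}
U(w;q)\,(q)_\infty \;=\; -\frac{1}{1+w^{-1}}\left(\sum_{n\in\Z\setminus\{0\}}\frac{(-1)^n q^{\frac{n(3n+1)}{2}}}{1+wq^n}-\sum_{n\in\Z\setminus\{0\}}\frac{w^{-n}q^{\frac{n(n+1)}{2}}}{1+wq^n}\right)
\end{equation*}
(a consequence of Entry 3.4.7 of the Lost Notebook together with Garvan's Lemma 7.9), expands the denominators as geometric series in the region $0<|q|<|w|<1$, and extracts the coefficient of $w^m$ from each piece; after a reindexing the two resulting false-theta-type sums combine to give precisely your $(\star)$. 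If you want to salvage your proposal, this is the route to make precise. I would caution against your first suggestion: the inner sum $\beta_n$ is not a standard Bailey pair relative to any obvious base, and the sign pattern $(-1)^{n+1}$ together with the partial-theta factor $\sum_{i=0}^{n-1}q^{i(n+m)}$ in $(\star)$ is characteristic of coefficient extraction from an Appell--Lerch series rather than of an iteration of the Bailey chain, so there is no reason to expect that machinery to produce the identity without substantial extra work.
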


Comparing
\eqref{defineUm}
to \eqref{E:Rgen} and \eqref{E:Cgen}, it is not surprising that unimodal sequences are closely related to the generating functions for partition ranks and cranks, although this is certainly not clear from the combinatorial definitions.
The precise relationship is described in the following corollary to Theorem \ref{thm:fixedm}.

\begin{corollary}\label{cor:ospt}
We have
$$\sum_{n\ge 1} u(0, n) q^n = \sum_{n\ge 1} \ospt(n) q^n,$$
where $$\ospt(n) := \sum_{\substack{   |\lambda| = n \\ \crank(\lambda) >0 }} \crank(\lambda) -  \sum_{\substack{ |\lambda| = n \\ \rk(\lambda) >0}} \rk(\lambda).$$
\end{corollary}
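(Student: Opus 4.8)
The plan is to combine Theorem~\ref{thm:fixedm} with the series representations in \eqref{E:Cgen} and \eqref{E:Rgen}, by recognizing $\ospt(n)$ as a difference of first positive moments. Writing $M_1^+(n) := \sum_{m>0} m\, M(m,n)$ and $N_1^+(n) := \sum_{m>0} m\, N(m,n)$, the definition of $\ospt$ in the statement is exactly $\ospt(n) = M_1^+(n) - N_1^+(n)$, so it suffices to produce a generating function for each positive moment and to match the difference against the $m=0$ case of \eqref{defineUm}, namely
\begin{equation*}
U_0(q) = \frac{1}{(q)_\infty} \sum_{n\ge1} \frac{(-1)^n q^{\frac{n(n+1)}{2}}}{1-q^n}\left(q^{n^2}-1\right).
\end{equation*}

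First I would extract the coefficient of $w^m$ (for fixed $m\ge1$) in the crank generating function \eqref{E:Cgen}. Splitting the Hecke-type sum into the terms $n\ge1$, $n=0$, and $n\le-1$, the middle term contributes $\frac{1-w}{1-w}=1$, while for $n\ge1$ one expands $\frac{1}{1-wq^n}=\sum_{j\ge0} w^j q^{nj}$ in nonnegative powers of $w$, and for $n\le-1$ one expands in powers of $w^{-1}$. By the crank symmetry $M(m,n)=M(-m,n)$ the two tails are mirror images, so the positive powers of $w$ come solely from the $n\ge1$ piece; since the coefficient of $w^m$ in $(1-w)\sum_{j\ge0}w^j q^{nj}$ equals $q^{n(m-1)}(q^n-1)$ for $m\ge1$, this gives
\begin{equation*}
\sum_{n} M(m,n) q^n = \frac{1}{(q)_\infty} \sum_{n\ge1} (-1)^n q^{\frac{n(n+1)}{2}+n(m-1)}\left(q^n-1\right)
\qquad (m\ge1).
\end{equation*}
Multiplying by $m$, summing over $m\ge1$, interchanging the two sums, and applying $\sum_{m\ge1} m\,x^{m-1} = (1-x)^{-2}$ with $x=q^n$ collapses the inner sum to yield
\begin{equation*}
\sum_{n\ge1} M_1^+(n)\, q^n = \frac{1}{(q)_\infty} \sum_{n\ge1} \frac{(-1)^{n-1} q^{\frac{n(n+1)}{2}}}{1-q^n}.
\end{equation*}

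The identical computation applied to \eqref{E:Rgen}, which differs only in carrying the exponent $\frac{n(3n+1)}{2}$ in place of $\frac{n(n+1)}{2}$, produces the analogous formula for $\sum_{n\ge1} N_1^+(n)\, q^n$. Subtracting the two, and using $\frac{n(3n+1)}{2} = \frac{n(n+1)}{2} + n^2$ to factor $q^{\frac{n(n+1)}{2}}\left(1-q^{n^2}\right)$ out of each summand, recovers precisely $U_0(q)$ above, completing the identification $\sum_{n\ge1} u(0,n)q^n = \sum_{n\ge1}\ospt(n)q^n$.

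The main point requiring care is the rearrangement of the Appell--Lerch sum into separated positive and negative $w$-expansions: one should justify the term-by-term geometric expansion and the interchange of the sums over $m$ and $n$ as an identity of formal power series in $q$, where each coefficient is a finite sum. A secondary technical check is that the exceptional values of $M(m,n)$ at $n=1$ (and the conventions for the empty partition) do not disturb the low-order coefficients; a direct comparison for small $n$, for instance $\ospt(1)=M_1^+(1)=1=u(0,1)$, confirms the normalization.
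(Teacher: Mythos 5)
Your proof is correct. The paper's own proof is a two-line citation: it quotes Theorem 1 of \cite{ACK}, namely
\begin{equation*}
\sum_{n\geq0} \ospt(n) q^n = \frac{1}{(q)_\infty} \sum_{n \geq 1} \left(\frac{(-1)^{n+1} q^{\frac{n(n+1)}{2}}}{1-q^n} -
\frac{(-1)^{n+1} q^{\frac{n(3n+1)}{2}}}{1-q^n}\right),
\end{equation*}
and matches this against the $m=0$ case of Theorem \ref{thm:fixedm} exactly as you do in your last step, via $\frac{n(3n+1)}{2}=\frac{n(n+1)}{2}+n^2$. The genuine difference is that you re-derive the quoted identity from the Appell--Lerch representations \eqref{E:Rgen} and \eqref{E:Cgen}: your coefficient extraction (the $n\ge1$ terms alone producing the positive powers of $w$, with coefficient $q^{n(m-1)}(q^n-1)$ at $w^m$, and the collapse $\sum_{m\ge1}m\,q^{n(m-1)}(q^n-1)=-\frac{1}{1-q^n}$) is correct and is essentially the computation carried out in \cite{ACK} itself. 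What your route buys is self-containedness --- the corollary then rests only on the Hecke-type forms of $R$ and $C$ already displayed in the introduction, plus Theorem \ref{thm:fixedm} --- and your derivation is structurally parallel to the paper's own proof of Theorem \ref{thm:fixedm} (geometric expansion in the region $0<|q|<|w|<1$, coefficient extraction in $w$), so nothing new needs to be justified. What the citation buys is brevity and a clean attribution of the $\ospt$ generating function to its source. Your two flagged caveats (justifying the expansions/interchanges, and the $n=1$ convention $M(\pm1,1)=1$, $M(0,1)=-1$ under which $\ospt(1)=1=u(0,1)$) are exactly the right points to check, and both are handled correctly.
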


The $\ospt$-function was introduced by Andrews, Chan, and Kim in \cite{ACK}, where they provided a more combinatorial proof of Garvan's inequalities for even crank and rank moments, and also introduced a natural variant for (positive) odd moments. In particular, the $\ospt(n)$ function above is essentially the difference of the {\it first} moments of the partition crank and rank statistics. They also gave a combinatorial interpretation
of $\ospt(n)$ in terms of so-called even and odd strings in the partitions of $n$ \cite[Theorem 4]{ACK}.
From their interpretation, it is clear that $\ospt(n)\ge0$.

Furthermore, the asymptotic behavior of the $\ospt$-function was determined by the first and third author, who proved in \cite[Theorem 1.4]{BM12TAMS} that $\ospt(n)\sim\frac{p(n)}{4}$. In turn, Chan and Mao studied the combinatorial relationship between the $\ospt$-function and partitions; one of their main results \cite[equation (1.9)]{CM} proves that $\ospt(n)<\frac{p(n)}{2}$ for $n\ge3$. Corollary \ref{cor:ospt} provides an alternative combinatorial interpretation of the $\ospt$-function. By
examining rank zero strongly unimodal sequences, we obtain the following refinement of Chan and Mao's inequality.

\begin{theorem}\label{thm:ospt}
For $n\ge2$ we have
$$\ospt(n)\le \frac{p(n)-M(0,n)}{2}.$$
We note that
$M(0,n)$ is positive for $n\ge3$.
\end{theorem}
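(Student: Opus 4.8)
The plan is to use Corollary \ref{cor:ospt} together with the symmetry of the crank to recast the statement as an inequality between two genuine counting functions, and then to prove that inequality by a generating-function comparison. Since the crank distribution is symmetric, $M(m,n)=M(-m,n)$, and $\sum_{m\in\Z}M(m,n)=p(n)$, we have
\[
\frac{p(n)-M(0,n)}{2}=\sum_{m\ge1}M(m,n)=:M_{>0}(n),
\]
the number of partitions of $n$ with positive crank. By Corollary \ref{cor:ospt}, $\ospt(n)=u(0,n)$, so the theorem is \emph{equivalent} to the clean statement $u(0,n)\le M_{>0}(n)$. The auxiliary claim that $M(0,n)>0$ for $n\ge3$ is immediate: for such $n$ the partition $(n-1,1)$ has $o(\lambda)=1$ and exactly one part exceeding $1$, hence $\crank=0$, so $M(0,n)\ge1$; at $n=2$ one has $M(0,2)=0$ with equality in the theorem.

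Next I would produce matching $q$-series for both sides. Extracting the coefficient of $w^m$ for $m\ge1$ from the Garvan series in \eqref{E:Cgen} and summing over $m\ge1$ collapses a geometric series and yields the remarkably simple identity
\[
\sum_{n\ge1}M_{>0}(n)q^n=\frac{1}{(q)_\infty}\sum_{n\ge1}(-1)^{n-1}q^{\frac{n(n+1)}{2}};
\]
here only the terms with $n\ge1$ in the bilateral sum contribute positive powers of $w$, the $n=0$ term contributes to $w^0$, and the terms with $n\le-1$ contribute to negative powers of $w$ (in agreement with crank symmetry). On the other side, $\sum_{n\ge1}u(0,n)q^n=U_0(q)$ is given by \eqref{defineUm} with $m=0$.

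I would then subtract. Using $\frac{1-q^{n^2}}{1-q^n}=\sum_{i=0}^{n-1}q^{ni}$ to rewrite $U_0(q)$, the expression for $M_{>0}(q)$ is exactly the $i=0$ contribution, so the difference telescopes to
\[
\sum_{n\ge1}\bigl(M_{>0}(n)-u(0,n)\bigr)q^n=\frac{1}{(q)_\infty}\sum_{n\ge2}(-1)^{n}q^{\frac{n(n+1)}{2}}\sum_{i=1}^{n-1}q^{ni}.
\]
The lowest exponent on the right is $q^5$, which already explains the equalities $u(0,n)=M_{>0}(n)$ for $n\le4$ visible in the table. It therefore remains to prove that this series has nonnegative coefficients for every $n$.

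This nonnegativity is the crux and the main obstacle. Writing the exponents as $\frac{n(n+1)}{2}+ni=\sum_{s=1}^{n}(s+i)$, the inner sum records partitions of $n\ge2$ \emph{consecutive} integers whose smallest part lies between $2$ and the number of parts, weighted by $(-1)^{(\#\text{parts})}$; after convolving with $1/(q)_\infty$ (an arbitrary partition of the remainder) one must show the signed count is nonnegative. Equivalently, regrouping the alternating sum gives $M_{>0}(n)-u(0,n)=\sum_{j\ge1}\bigl(N_{\ge j}(n)-M_{\ge j+1}(n)\bigr)$, where $M_{\ge j}(n)=\sum_{m\ge j}M(m,n)$ and $N_{\ge j}(n)=\sum_{m\ge j}N(m,n)$; applying the extraction above to \eqref{E:Cgen} and \eqref{E:Rgen} gives the partial-theta forms $\sum_n M_{\ge j}(n)q^n=\frac{1}{(q)_\infty}\sum_{n\ge1}(-1)^{n-1}q^{\frac{n(n+2j-1)}{2}}$ and $\sum_n N_{\ge j}(n)q^n=\frac{1}{(q)_\infty}\sum_{n\ge1}(-1)^{n-1}q^{\frac{n(3n+2j-1)}{2}}$. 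Thus it suffices to prove the interlacing rank--crank inequalities $M_{\ge j+1}(n)\le N_{\ge j}(n)$ for all $j\ge1$, which together with the standard $N_{\ge j}(n)\le M_{\ge j}(n)$ sandwich the cumulative rank counts between consecutive cumulative crank counts. I would establish $M_{\ge j+1}(n)\le N_{\ge j}(n)$ either by a Franklin-type sign-reversing involution on partitions decorated by the consecutive-integer staircases above, or by invoking the cumulative rank--crank inequalities in the spirit of Chan--Mao \cite{CM} and Garvan \cite{Ga11}. Designing the involution explicitly---matching the positive ($n$ even) and negative ($n$ odd) blocks, which overlap across different $n$---is where essentially all the difficulty lies; the asymptotics $\ospt(n)\sim p(n)/4$ against $M_{>0}(n)\sim p(n)/2$ confirm that the inequality is strict with a wide margin for large $n$ and tight only at $n=2,3,4$.
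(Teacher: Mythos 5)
Your reduction is correct as far as it goes: by crank symmetry $\tfrac{p(n)-M(0,n)}{2}=\sum_{m\ge1}M(m,n)=:M_{>0}(n)$, the extraction from \eqref{E:Cgen} giving $\sum_{n\ge1}M_{>0}(n)q^n=\tfrac{1}{(q)_\infty}\sum_{n\ge1}(-1)^{n-1}q^{n(n+1)/2}$ is right, and the telescoping against \eqref{defineUm} with $m=0$, as well as the Abel-summation identity $M_{>0}(n)-\ospt(n)=\sum_{j\ge1}\bigl(N_{\ge j}(n)-M_{\ge j+1}(n)\bigr)$, all check out. The problem is that you have reduced the theorem to the interlacing inequalities $M_{\ge j+1}(n)\le N_{\ge j}(n)$ for all $j\ge1$ (equivalently, nonnegativity of the coefficients of $\tfrac{1}{(q)_\infty}\sum_{n\ge2}(-1)^nq^{n(n+1)/2}\sum_{i=1}^{n-1}q^{ni}$), and you do not prove them. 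You offer two options --- a Franklin-type involution you explicitly have not constructed (``where essentially all the difficulty lies''), or a citation to Chan--Mao \cite{CM} and Garvan \cite{Ga11} --- but the inequalities actually available in that literature are of the form $N_{\ge j}(n)\le M_{\ge j}(n)$ with the \emph{same} index on both sides (which yields $\ospt(n)\ge0$, not an upper bound), and the shifted-index version you need is not among them. Positivity of coefficients of such partial-theta-type alternating series over $1/(q)_\infty$ is a genuinely delicate matter, so this is a real gap, not a routine verification left to the reader.

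The paper avoids this entirely and is worth comparing against. It proves $u(0,n)\le\tfrac{p(n)-M(0,n)}{2}$ by exhibiting an explicit injection from rank-zero strongly unimodal sequences into the set $\sS$ of pairs $(\mu,\nu)$ of partitions into distinct parts with $\ell(\mu)=\ell(\nu)+1$ (split the sequence at its peak, putting the peak into $\mu$), and then evaluates $s(n)=|\{(\mu,\nu)\in\sS:|\mu|+|\nu|=n\}|$ in closed form: a short manipulation of $\sum_{n\ge1}\frac{q^{n(n+1)/2}}{(q)_n}\frac{q^{n(n-1)/2}}{(q)_{n-1}}$ against the Durfee-square identity $\sum_{n\ge1}\frac{q^{n^2}}{(q)_n^2}$ and Kaavya's formula $\sum_{n\ge1}M(0,n)q^n=(1-q)\sum_{n\ge1}\frac{q^{n^2+2n}}{(q)_n^2}$ gives $s(n)=\tfrac{p(n)-M(0,n)}{2}$ for $n\ge2$. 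All the positivity comes for free from the injection; no alternating series ever needs to be shown nonnegative. If you want to salvage your route, you would need an independent proof of $M_{\ge j+1}(n)\le N_{\ge j}(n)$; otherwise I would recommend replacing the second half of your argument with an injection of this kind. (Your verification that $M(0,n)>0$ for $n\ge3$ via the partition $(n-1,1)$ is fine.)
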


One of the striking features of the columns in Table \ref{T:umn} is that $m\mapsto u(m,n)$ appears to be unimodal.
In fact, additional numerical data (checked by MAPLE for all $n \leq 500$) suggests that a stronger property holds. Recall that a sequence of positive real numbers $\{a_m\}_{m = M}^N$ ($M,N \in \Z$) is {\it log-concave} if $a_m^2 - a_{m-1} a_{m+1} \geq 0$ for all $M+1 \leq m \leq N-1$. It is a straightforward fact that if $\{a_m\}$ is a {\it symmetric} sequence ($a_{-m} = a_m$ for all $0 \leq m \leq M$) and log-concave, then it is unimodal with peak $a_0$. See \cite{Stan89} for further discussion of log-concave sequences. We offer the following conjecture.

\begin{conjecture}\label{pgeospt}
For $n\geq \max(7,\frac{|m|(|m|+1)}{2}+1)$ we have
$$u(m, n)^2 > u(m-1, n) u(m+1, n).$$
\end{conjecture}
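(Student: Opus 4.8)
The plan is to treat the inequality as a statement of log-concavity for the ``horizontal'' sequence $m\mapsto u(m,n)$ at fixed $n$, and to prove it through precise asymptotics rather than combinatorially. The guiding heuristic comes from the distributional results in the spirit of \eqref{E:rankDist}: one expects the rank of a strongly unimodal sequence to be asymptotically Gaussian, so that
\begin{equation*}
u(m,n) = \frac{u(n)}{\sqrt{2\pi}\,\sigma_n}\,e^{-\frac{m^2}{2\sigma_n^2}}\bigl(1+\mathcal E(m,n)\bigr)
\end{equation*}
for a variance $\sigma_n^2$ growing like a fixed positive power of $n$, with an error term $\mathcal E$. Since a Gaussian profile is itself log-concave, its main term satisfies $e^{-m^2/\sigma_n^2} = e^{1/\sigma_n^2}\,e^{-((m-1)^2+(m+1)^2)/(2\sigma_n^2)}$, i.e. it exceeds the corresponding product for $u(m-1,n)u(m+1,n)$ by the multiplicative factor $e^{1/\sigma_n^2}>1$. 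Thus the conjecture should follow once this expansion is made rigorous and uniform in $m$, provided the error can be shown not to destroy the (small) positive gap.

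To obtain such an expansion I would start from the two-variable generating function $U(w;q)=\sum_{n,m}u(m,n)w^mq^n$ and recover $u(m,n)$ by the circle integral $u(m,n)=\int_0^1 U(e^{2\pi i\theta};q)\,e^{-2\pi i m\theta}\,d\theta$ followed by extraction of the coefficient of $q^n$, or equivalently work directly from the closed form for $U_m(q)$ in \eqref{defineUm}. Because $U(w;q)$ is mixed mock modular, the machinery used in \cite{BM,rhoades} to produce the full asymptotic series for $u(n)$ applies here: a modular transformation moves the dominant contribution to $q\to 1$, Wright's variant of the Circle Method isolates the main arc, and a saddle-point analysis in the auxiliary variable $\theta$ produces the Gaussian factor in $m$ together with explicitly computable lower-order corrections. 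The goal is an asymptotic expansion for $u(m,n)$, uniform for $m$ in a window $|m|\le C\sqrt{n}\log n$, pinning down $\sigma_n^2$ and the first several terms of $\mathcal E$.

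The principal obstacle is that the inequality is a near-cancellation. In the difference
\begin{equation*}
u(m,n)^2-u(m-1,n)u(m+1,n),
\end{equation*}
the leading Gaussian terms cancel, and what survives is governed by the discrete second difference of $\log u(m,n)$ in $m$, which is only of size $\sim 1/\sigma_n^2\to 0$. Consequently it is not enough to know $u(m,n)$ to leading order: one must control $\mathcal E(m,n)$, and in particular its second difference $\mathcal E(m+1,n)-2\mathcal E(m,n)+\mathcal E(m-1,n)$, to accuracy $o(1/\sigma_n^2)$, \emph{uniformly} across the whole range of admissible $m$. Establishing this uniform control, especially near the upper end of the window where $m$ is comparable to $\sqrt{n}$ and the Gaussian approximation degrades, is where the genuine difficulty lies, and is the reason the full statement is offered here only as a conjecture.

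To cover the remaining range I would split off the ``edge'' regime, where $n$ is close to the first index $\tfrac{(m+1)(m+2)}{2}$ at which $u(m,n)$ becomes nonzero. There Theorem~\ref{thm:fixedm} gives the exact identity $u(m,n)=p\!\left(n-\tfrac{(m+1)(m+2)}{2}\right)$ in an initial block, so the inequality reduces to a spread-out log-concavity of the partition function at the shifted arguments $n-\tfrac{m(m+1)}2$, $n-\tfrac{(m+1)(m+2)}2$, and $n-\tfrac{(m+2)(m+3)}2$, which follows from the (essentially classical) concavity of $\log p$. After matching the bulk and edge estimates on their overlap, the finitely many small cases below the resulting explicit threshold are precisely those already verified by machine for $n\le 500$, which would complete the argument; the delicate point throughout remains the uniform error control of the preceding step.
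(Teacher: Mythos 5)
You should first be aware that the statement you are proving is stated in the paper as a \emph{conjecture}: the authors do not prove it, and the only rigorous result they obtain in its direction is Theorem \ref{thm:unimodalAsymp}, which establishes $u(m,n)^2-u(m-1,n)u(m+1,n)\sim \frac{\pi}{768\sqrt6\,n^{5/2}}e^{2\pi\sqrt{2n/3}}$ for each \emph{fixed} $m$ as $n\to\infty$, via Wright's Circle Method applied to $U_m(q)$ from \eqref{defineUm}. Your ``bulk'' strategy is essentially the same as theirs (circle method, expansion of $u(m,n)$ to the order $n^{-1/2}$ where the $m$-dependence first appears, second difference in $m$ of the logarithm), and you correctly identify the reason it does not yield the full conjecture: the inequality lives at the scale of the second difference of $\mathcal E(m,n)$, and one needs this uniformly in $m$. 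Since you yourself concede that this uniform error control is not supplied, the proposal is a program rather than a proof; as written it proves nothing beyond what Theorem \ref{thm:unimodalAsymp} already gives. I would add that the required range of uniformity is worse than your window $|m|\le C\sqrt n\log n$ suggests: the support of $m\mapsto u(m,n)$ extends to $|m|\asymp\sqrt{2n}$, while the variance is $\sigma_n^2=\sqrt{6n}/\pi\asymp n^{1/2}$, so at the top of the range one is $m/\sigma_n\asymp n^{1/4}$ standard deviations into the tail, where the Gaussian local description has no content and the expansion in powers of $n^{-1/2}$ with $m$-dependent coefficients (each term carrying a factor $m^2/\sqrt n\asymp\sqrt n$) is not even formally small.

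The ``edge regime'' reduction also fails concretely. Theorem \ref{thm:fixedm} gives the exact identity $u(m,\tfrac12(m+1)(m+2)+j)=p(j)$ only for $0\le j\le m+1$, i.e.\ on the block $\tfrac12(m+1)(m+2)\le n\le \tfrac12(m+1)(m+2)+m+1$. The corresponding block for $m-1$ ends at $\tfrac12 m(m+1)+m=\tfrac12(m^2+3m)$, which is strictly below the start $\tfrac12(m^2+3m+2)$ of the block for $m$; the blocks for $m-1$, $m$, $m+1$ are pairwise disjoint, so there is no $n$ at which all three of your proposed substitutions $u(m\pm1,n)=p(\cdot)$, $u(m,n)=p(\cdot)$ are simultaneously valid, and the reduction to a spread-out log-concavity of $p$ never applies. (In the regime where $u(m,n)>0$ but $n$ lies below the first nonzero index $\tfrac12(m+2)(m+3)$ of $u(m+1,\cdot)$, the inequality is trivially true because the right-hand side vanishes — but that is a different, and much easier, argument than the one you give, and it still leaves untouched the genuinely hard range $n\ge\tfrac12(m+2)(m+3)$ where all three values are positive. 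Note also that the numerical verification for $n\le500$ cannot ``complete'' an argument whose asymptotic part comes with no explicit threshold.)
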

\begin{remarks}
{\it 1.} Conjecture \ref{pgeospt} states that $\{u(m,n)\}_m$ is strictly log-concave for $n > 6$ (and hence strictly unimodal). The data in Table \ref{T:umn} shows that for $n \leq 6$ the sequence is log-concave, but not necessarily strict; for example, $u(1,6)^2 - u(0,6) u(2,6) = 0$.

{\it 2.} Since by Corollary \ref{cor:ospt} we have that $u(0,n) = \ospt(n)$,
it is natural to ask if there are other combinatorial interpretations of $u(m,n)$ for fixed $m\geq 1$.
Such interpretations may give insight into Conjecture \ref{pgeospt}.

{\it 3.} There have been a number of recent results on the log-concavity of partition enumeration functions; for example, DeSalvo and Pak \cite[Theorem 1.1]{DP15} proved that $\{p(n)\}_{n \geq 25}$ is log-concave.
\end{remarks}

As further evidence for the unimodality/log-concavity of the values of $u(m,n)$, we prove the following asymptotic version.
\begin{theorem}\label{thm:unimodalAsymp}
For fixed $m\in \N_0,$ we have as $n\rightarrow\infty$
\begin{align*}
u(m,n) &\sim \frac{1}{16\sqrt{3}n} e^{\pi\sqrt{\frac{2n}{3}}},\\
u(m, n) - u(m+1, n)&\sim  \frac{\pi(2m+1)}{96 \sqrt{2} n^{\frac{3}{2}} }
e^{\pi \sqrt{\frac{2n}{3}}},
\\
u(m,n)^2 - u(m-1,n)u(m+1,n)
&\sim
	\frac{\pi}{768\sqrt{6}n^{\frac52}}e^{2\pi\sqrt{\frac{2n}{3}}}
.
\end{align*}
In particular, Conjecture \ref{pgeospt} is true for sufficiently large $n$.
\end{theorem}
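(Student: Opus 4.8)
The plan is to apply Wright's Circle Method to the generating function $U_m(q)$ from \eqref{defineUm}, so the central task is to determine the behavior of $U_m(e^{-t})$ as $t\to 0^+$ and to show that the contribution from $q$ away from $1$ is negligible. Writing $q=e^{-t}$, the modular transformation of Dedekind's $\eta$-function gives $\frac{1}{(q)_\infty}\sim\sqrt{\frac{t}{2\pi}}\,e^{\frac{\pi^2}{6t}}$, and more precisely $\frac{1}{(q)_\infty}=\sqrt{\frac{t}{2\pi}}\,e^{\frac{\pi^2}{6t}}e^{-\frac{t}{24}}\bigl(1+O(e^{-4\pi^2/t})\bigr)$, so the only polynomial corrections in $t$ come from the factor $e^{-t/24}$. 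It then remains to expand the sum
\[
S_m(q):=q^{\frac{m(m+1)}{2}}\sum_{n\ge1}\frac{(-1)^nq^{\frac{n(n+1)}{2}+mn}}{1-q^{n+m}}\bigl(q^{n(n+m)}-1\bigr)
\]
near $q=1$. Using the geometric-series identity $\frac{q^{n(n+m)}-1}{1-q^{n+m}}=-\sum_{j=0}^{n-1}q^{j(n+m)}$ rewrites $S_m$, up to the prefactor $q^{m(m+1)/2}$, as an alternating double sum of the shape $\sum_{n\ge1}(-1)^n\sum_{j=0}^{n-1}e^{-tQ_m(n,j)}$ for an explicit quadratic form $Q_m$, to which Euler--Maclaurin (Boole) summation applies.

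The key claim to establish is that
\[
U_m(e^{-t})=\sqrt{\frac{t}{2\pi}}\,e^{\frac{\pi^2}{6t}}\Bigl(\tfrac14+c_1(m)\,t+O(t^2)\Bigr),
\]
where the leading constant $\tfrac14$ is independent of $m$ and $c_1(m)$ is an explicit quadratic polynomial in $m$. The alternating sign makes the bulk of the double sum cancel, so $S_m(q)$ approaches the constant $\tfrac14$ rather than growing like $t^{-1}$; the boundary terms in the Euler--Maclaurin expansion then produce the $m$-dependent coefficient, and folding in the $e^{-t/24}$ factor from $1/(q)_\infty$ yields $c_1(m)$. With this in hand, a standard saddle-point evaluation of $u(m,n)=\frac{1}{2\pi i}\int U_m(e^{-t})e^{nt}\,dt$ at $t_0=\frac{\pi}{\sqrt{6n}}$ gives the two-term expansion
\[
u(m,n)=\frac{e^{\pi\sqrt{2n/3}}}{16\sqrt3\,n}\Bigl(1+\frac{b_1(m)}{\sqrt n}+O\!\left(\tfrac1n\right)\Bigr),
\]
in which $b_1(m)$ is again quadratic in $m$, its $m$-dependence coming entirely from $c_1(m)$ (the universal $n^{-1/2}$ saddle correction attached to the constant $\tfrac14$ is $m$-independent).

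The three asymptotics then follow by assembling this expansion. The first is immediate, since the leading term is $\tfrac14 p(n)\sim\frac{1}{16\sqrt3\,n}e^{\pi\sqrt{2n/3}}$. For the second, the $m$-independent parts of $u(m,n)$ and $u(m+1,n)$ cancel, leaving $u(m,n)-u(m+1,n)\sim\frac{e^{\pi\sqrt{2n/3}}}{16\sqrt3\,n^{3/2}}\bigl(b_1(m)-b_1(m+1)\bigr)$; as $b_1$ is quadratic, its first difference is linear in $m$, and one checks it matches $\frac{\pi(2m+1)}{96\sqrt2\,n^{3/2}}e^{\pi\sqrt{2n/3}}$. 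For the third, expanding $u(m,n)^2-u(m-1,n)u(m+1,n)$ shows that the leading and $n^{-1/2}$-relative pieces involve only $2b_1(m)-b_1(m-1)-b_1(m+1)$, the discrete second difference of the quadratic $b_1$, which is a nonzero constant; this produces the stated $n^{-5/2}$ asymptotic. Finally, since the third asymptotic has strictly positive leading coefficient, $u(m,n)^2-u(m-1,n)u(m+1,n)>0$ for all sufficiently large $n$ (with threshold depending on the fixed $m$), giving Conjecture \ref{pgeospt} in that range.

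The main obstacle is the Euler--Maclaurin analysis of the alternating double sum defining $S_m(q)$: one must compute its expansion through order $t$ with the exact $m$-dependence (so as to identify $c_1(m)$, and hence $b_1(m)$, as the correct quadratic polynomials), and do so with enough uniformity in the complex variable $t$ to bound the minor arcs, i.e.\ to verify that $U_m(e^{-t})$ is exponentially smaller when $q$ is bounded away from $1$ on the circle, so that the saddle point at $q=1$ dominates. Pinning down the precise constants, in particular the value $\frac{\pi}{\sqrt6}$ governing the second difference that forces strict log-concavity, requires careful bookkeeping of the boundary contributions together with the subleading saddle-point corrections, which also feed into the $n^{-3/2}$ term.
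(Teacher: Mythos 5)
Your plan is correct and follows essentially the same route as the paper: extract $V_m(q)=(q)_\infty U_m(q)$ as an alternating double sum via finite geometric series, apply (multi-dimensional) Euler--Maclaurin to get $V_m(e^{-t})=\tfrac14+c_1(m)t+O(t^2)$ with $c_1$ quadratic in $m$, bound $U_m$ away from $q=1$, and run Wright's Circle Method to obtain $u(m,n)=\frac{e^{\pi\sqrt{2n/3}}}{16\sqrt3\,n}\bigl(1+b_1(m)n^{-1/2}+O(n^{-1})\bigr)$ with the $m$-dependence of $b_1(m)$ equal to $-\frac{\pi m^2}{2\sqrt6}$, whose first and second differences give exactly the stated constants. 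The only work left implicit (the explicit Bernoulli-polynomial bookkeeping identifying $c_1(m)$ and the minor-arc estimate via Wright's bound on $1/(q)_\infty$) is carried out in the paper precisely as you describe.
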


\begin{remark}
We prove Theorem \ref{thm:unimodalAsymp} using Wright's Circle Method \cite{Wri33},
which naturally gives asymptotic expansions of the form, for $N\in\mathbb N_0$,
\begin{equation*}
u(m,n) = \frac{e^{\pi \sqrt{\frac{2n}{3}}}}{16\sqrt{3} n} \left(1 + \sum_{1\leq r\leq N} \frac{\alpha_r(m)}{n^{\frac{r}{2}}} +O\left(n^{-\frac{N+1}{2}}\right)\right).
\end{equation*}
Here all of the $\alpha_r(m)$ are explicitly computable, although the asymptotic terms stated in Theorem \ref{thm:unimodalAsymp}
only require the part of $\alpha_1$ that depends on the value of $m$.
\end{remark}

\subsection{Asymptotic results for the rank of strongly unimodal sequences}

We now consider the values of the rank statistic among all $\sigma \in \cU(n)$ for large $n$. We calculate the moments of the rank, and then appeal to the probabilistic ``Method of Moments'' in order to describe the limiting distribution of the rank. For $k \in\N_0$, define
$$
u_{2k}(n) := \sum_{m\in\Z} m^{2k} u(m,n).
$$
Note that \eqref{symmetry} implies that the analogous odd moments satisfy $u_{2k +1}(n) = 0$.
The following theorem provides the asymptotic behavior of the even moments, where we use the double factorial notation
$(2k-1)!!:= 1\cdot 3\cdot \ldots \cdot (2k-3) \cdot (2k-1)$.

\begin{theorem}\label{thm:distributionAsymp}
For each $k\in\mathbb N_0$, we have
\begin{equation*}
u_{2k}(n) \sim \frac{e^{\pi \sqrt{\frac{2n}{3}}}}{8 \cdot 6^{\frac14} n^{\frac34}} (2k -1)!!\left(\frac{6n}{\pi^2}\right)^{\frac{k}{2}}.
\end{equation*}
\end{theorem}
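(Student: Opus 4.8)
The plan is to compute the even moments $u_{2k}(n)$ via Wright's Circle Method applied to an appropriate two-variable generating function, extracting the dominant singular behavior near $q=1$. The natural object is $U(w;q) = \sum_{n,m} u(m,n) w^m q^n$, from which the $2k$-th moment is obtained by applying the differential operator $\left(w\frac{\partial}{\partial w}\right)^{2k}$ and then specializing to $w=1$. Concretely, I would set
\begin{equation*}
\cU_{2k}(q) := \sum_{n\ge0} u_{2k}(n)\, q^n = \left[\left(w\frac{\partial}{\partial w}\right)^{2k} U(w;q)\right]_{w=1}.
\end{equation*}
The leading case $k=0$ is just $U(q)$, whose asymptotics are already recorded in the excerpt (the mixed mock modular analysis of \cite{rhoades}), giving $u(n)=u_0(n)\sim \frac{1}{8\cdot 6^{1/4}n^{3/4}}e^{\pi\sqrt{2n/3}}$, which matches the claimed formula since $(2\cdot0-1)!!=1$ and the $(6n/\pi^2)^0$ factor is $1$. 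So the content is the contribution of the $w$-derivatives for $k\ge1$.

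First I would obtain the behavior of $\cU_{2k}(q)$ as $q\to 1^-$, i.e.\ as $\tau\to 0$ along the imaginary axis, writing $q=e^{-t}$ with $t\to 0^+$. The key heuristic is that differentiating in $w$ pulls down factors that, near the dominant cusp, behave like the appropriate power of the natural scaling parameter. Since the rank is normalized by $\sqrt{6n}$ in the final distributional statement, one expects each pair of $w$-derivatives to contribute a factor growing like $t^{-1}\sim \sqrt{n}$ in the saddle-point regime; more precisely I anticipate
\begin{equation*}
\cU_{2k}(e^{-t}) \sim c_{2k}\, t^{-2k}\, \cU_0(e^{-t}),\qquad t\to0^+,
\end{equation*}
for explicit constants $c_{2k}$ encoding the double factorial $(2k-1)!!$. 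To pin down $c_{2k}$ I would expand $U(w;q)=\sum_{n\ge0}(-wq)_n(-w^{-1}q)_n q^{n+1}$, apply $(w\partial_w)^{2k}$, and analyze the modular/asymptotic transformation of the resulting Eichler-integral-type or mixed-mock pieces exactly as in the $k=0$ treatment, tracking the extra polynomial-in-$1/t$ factors. The moment structure should reduce, after the Mellin/saddle analysis, to Gaussian moments: the factor $(2k-1)!!\,(6n/\pi^2)^{k/2}$ is precisely the $2k$-th moment of a centered Gaussian with variance $6n/\pi^2$, which is the expected scaling from $\sqrt{6n}$-normalization.

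Having located the singularity, I would apply Wright's Circle Method: split the circle $|q|=e^{-t}$ into a major arc near $q=1$ and minor arcs, bound the minor-arc contribution to be exponentially smaller, and evaluate the major arc by the saddle-point method at the optimal $t\approx \pi/\sqrt{6n}$. This yields
\begin{equation*}
u_{2k}(n) \sim \frac{e^{\pi\sqrt{\frac{2n}{3}}}}{8\cdot 6^{1/4} n^{3/4}}\,(2k-1)!!\left(\frac{6n}{\pi^2}\right)^{\frac{k}{2}}.
\end{equation*}
The main obstacle will be rigorously controlling the mixed mock modular transformation of the differentiated generating function: unlike $C(w;q)$, the series $U(w;q)$ is not a genuine modular form at $w=1$, so the completion terms (Eichler integrals / nonholomorphic corrections) must be carried along, and I must verify that the $w$-derivatives do not promote a subdominant completion term into the leading asymptotic. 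Concretely, I would need the analogue of the mixed-mock decomposition used in \cite{rhoades}, differentiate it, and confirm that the holomorphic modular piece still dominates after applying $(w\partial_w)^{2k}$; establishing that the error terms remain of strictly smaller exponential-times-polynomial order is the delicate step, and it is what guarantees both the precise constant $(2k-1)!!$ and the clean Gaussian moment structure.
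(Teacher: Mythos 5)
Your overall strategy (apply $(w\partial_w)^{2k}$ to $U(w;q)$, locate the singular behavior at $q=1$, then run Wright's Circle Method with a minor-arc bound) is a plausible route, but it is not the one the paper takes, and as written it has a genuine gap at its center: you never actually compute the singular expansion of the moment generating function. The constant $(2k-1)!!$ is exactly the content of the theorem, and your justification for it is that the answer ``should reduce to Gaussian moments'' because the final formula looks like the $2k$-th moment of a Gaussian --- that is circular. Relatedly, your displayed intermediate claim $\cU_{2k}(e^{-t})\sim c_{2k}\,t^{-2k}\,\cU_0(e^{-t})$ is inconsistent both with your own heuristic (each \emph{pair} of $w$-derivatives contributes one factor of $t^{-1}$, so $2k$ derivatives give $t^{-k}$) and with your final answer, since $(6n/\pi^2)^{k/2}\approx t^{-k}$ at the saddle $t\approx \pi/\sqrt{6n}$; the correct exponent is $t^{-k}$. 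Finally, the proposed mechanism --- differentiate the mixed-mock decomposition of \cite{rhoades} and track completion terms --- is precisely the ``delicate step'' you defer, so the proof is not actually carried out.

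For comparison, the paper avoids both the mock modular bookkeeping and the minor arcs. It writes $\mathbb{U}_{2k}(q)=\frac{1}{(q)_\infty}\bigl(\delta_{k,0}V_0(q)+2\sum_{m\geq1}m^{2k}V_m(q)\bigr)$ with $V_m=(q)_\infty U_m$ given explicitly as a two-dimensional alternating theta-type sum (equation \eqref{VmPartial}); the extra sum over $m$ is then treated as a third summation variable, and a three-dimensional Euler--Maclaurin expansion produces the leading term as an explicit integral $\int_0^\infty F^{(0,0,0)}(0,0,x_3)\,dx_3$ with $F(\b x)=x_3^{2k}e^{-2(x_1+x_3/2)^2-6x_2^2-8(x_1+x_3/2)x_2}$ --- a genuine Gaussian moment integral, which is where $(2k-1)!!$ actually comes from. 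Instead of the Circle Method, the paper then applies Ingham's Tauberian theorem, which requires no minor-arc estimate but does require that $n\mapsto u_{2k}(n)$ be weakly increasing; this is supplied by a combinatorial injection (increase the peak by one), an ingredient your approach would not need but which you would have to replace by a rigorous minor-arc bound for the differentiated generating function. If you want to salvage your route, the essential missing work is an honest asymptotic expansion of $\sum_m m^{2k}V_m(q)$ (or equivalently of $(w\partial_w)^{2k}U(w;q)|_{w=1}$) as $t\to0^+$ with the correct power $t^{-k}$ and constant.
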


Recall the relationship between \eqref{E:rankDist} and \eqref{E:pkDist}, where the asymptotic distribution of the largest part of a partition suggested the natural shape of the distribution for the partition rank. The shape of Theorem \ref{thm:distributionAsymp} is similarly predicted by the close relationship between partitions into distinct parts and
strongly unimodal sequences, as there is a map from pairs $(\lambda, \mu)$ of such partitions to a strongly unimodal sequence given by
\begin{equation}
\label{E:lambdamu}
(\lambda, \mu) \mapsto \{\lambda_{\ell(\lambda)}, \dots, \lambda_2, \lambda_1, \mu_1, \mu_2, \dots \mu_{\ell(\mu)}\}.
\end{equation}
This map is only defined if the largest parts of $\lambda$ and $\mu$ are different, and is then in fact two-to-one onto
the set of strongly unimodal sequences. Denoting the number of such pairs of total size $n$ by $q_2(n)$, with corresponding generating function $\sum_{n\geq 0} q_2(n) q^n = (-q)_\infty^2$,
it is known that $u(n) \sim \frac{q_2(n)}{2}$ (for example, this follows immediately from \eqref{E:1/qinfty} and Theorem \ref{TauberianT} below).

The rank of the unimodal sequence in \eqref{E:lambdamu} is $\ell(\mu) - \ell(\lambda) \pm 1$ (depending on which partition contributes the peak), and thus it is relevant to understand the typical number of parts in a partition into distinct parts. Let $\cQ(n)$ denote the set of partitions into distinct parts of size $n$, and let $q(n) := |\cQ(n)|$ be the enumeration function. Szekeres \cite[Theorem 1]{szekeres} proved that for large $n$, if one picks $\lambda \in \cQ(n)$ uniformly at random, then $\ell(\lambda)$ is normally distributed, with mean $r_0 = \frac{2 \sqrt{3} \log{(2)}}{\pi} \sqrt{n}$ and variance $s^2 = \frac{\sqrt{3}}{\pi} (1 - (\frac{2 \sqrt{3} \log{(2)}}{\pi})^2) \sqrt{n}$.
As a rough estimate, we should therefore expect that for $\sigma \in \cU(n)$, we have $\rk(\sigma) = \ell(\mu) - \ell(\lambda) \pm 1$ for some $\mu \in \cQ(n_1)$ and $\lambda \in \cQ(n_2)$ such that $n_1 \sim n_2 \sim \frac{n}{2}$. This follows from Hardy and Ramanujan's famous asymptotic formula $\log{(q(n))} \sim \pi \sqrt{\frac{n}{3}}$ (see \cite[p. 109]{HR},) which implies that almost all $(\mu, \lambda)$ such that $|\mu| + |\lambda| = n$ satisfy $|\mu| \sim |\lambda| \sim \frac{n}{2}.$

It is a straightforward fact \cite[Example 20.6]{billingsley} that if, for $j \in \{ 1,2\}$, $X_j$ are independent normal random variables with mean $m_j$ and variance $\sigma_j^2$, respectively, then $X_1 - X_2$ is a normal random variable with mean $m_1 - m_2$ and variance $\sigma_1^2 + \sigma_2^2$. In our setting
we know by symmetry that $\rk(\sigma)$ has mean zero, and the heuristic described above suggests that it should be normally distributed with variance approximately $2s^2$, which is of order $\sqrt{n}$ (due to the combinatorial limitations of this rough model, we should not necessarily expect to obtain the precise constant).

Indeed, this prediction is confirmed by Theorem \ref{thm:distributionAsymp}, as we see that it may be equivalently written as
$$
\frac{u_{2k}(n)}{u(n) \left(\frac{6n}{\pi^2}\right)^\frac{k}{2}}
\sim (2k-1)!!.
$$
This matches the values of the even moments for the standard normal distribution (cf. Example 21.1 in \cite{billingsley}), and we therefore conclude that for large $n$, the rank is normally distributed around zero with variance $\frac{\sqrt{6n}}{\pi}$.
\begin{corollary}
\label{C:normal}
For all $x \in \R$, we have
$$\lim_{n\to \infty} \frac{1}{u(n)} \left| \left\{ \sigma \in \cU(n) : \frac{\textnormal{rank}(\sigma)}{\left(\frac{6n}{\pi^2}\right)^{\frac14}} \le x \right\} \right| = \Phi(x)$$
where $\Phi(x) := \frac{1}{\sqrt{2\pi}} \int_{-\infty}^x e^{-\frac{u^2}{2}} du$.
\end{corollary}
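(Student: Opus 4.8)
The plan is to deduce the corollary from Theorem \ref{thm:distributionAsymp} via the probabilistic Method of Moments. For each $n$, let $X_n$ be the random variable obtained by choosing $\sigma\in\cU(n)$ uniformly at random and setting $X_n:=\rk(\sigma)/\left(6n/\pi^2\right)^{1/4}$. The distribution function of $X_n$ evaluated at $x$ is exactly the quantity inside the limit in the statement, so the corollary is precisely the assertion that $X_n$ converges in distribution to the standard normal law $\Phi$ as $n\to\infty$.

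First I would compute the moments of $X_n$. For even order one has
$$
\mathbb{E}\left[X_n^{2k}\right]=\frac{1}{u(n)\left(6n/\pi^2\right)^{k/2}}\sum_{\sigma\in\cU(n)}\rk(\sigma)^{2k}=\frac{u_{2k}(n)}{u(n)\left(6n/\pi^2\right)^{k/2}},
$$
while the symmetry relation \eqref{symmetry} gives $u_{2k+1}(n)=0$, so that all odd moments of $X_n$ vanish identically. Combining Theorem \ref{thm:distributionAsymp} with the asymptotic $u(n)\sim e^{\pi\sqrt{2n/3}}/\left(8\cdot 6^{1/4}n^{3/4}\right)$ recorded above then yields
$$
\lim_{n\to\infty}\mathbb{E}\left[X_n^{2k}\right]=(2k-1)!!,\qquad\lim_{n\to\infty}\mathbb{E}\left[X_n^{2k+1}\right]=0,
$$
for every $k\in\N_0$.

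Next I would identify these limiting values as the moments of $\Phi$: the standard normal distribution has vanishing odd moments and moment of order $2k$ equal to $(2k-1)!!$ (cf. \cite[Example 21.1]{billingsley}). Since the normal law is uniquely determined by its moment sequence---its moments satisfy Carleman's condition, as $(2k-1)!!$ grows no faster than $(2k)!/(2^k k!)$---the Fr\'echet--Shohat theorem \cite[Theorem 30.2]{billingsley} applies: convergence of every moment of $X_n$ to the corresponding moment of a moment-determinate distribution forces $X_n\Rightarrow\Phi$. Because $\Phi$ is continuous, the resulting convergence of distribution functions holds at every $x\in\R$, which is exactly the claimed limit.

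The analytic heavy lifting has already been carried out in Theorem \ref{thm:distributionAsymp}, so the only genuine point requiring care is the verification that the limiting moment sequence determines a unique distribution; this is the step I would be most attentive to. It is, however, entirely classical for the normal law, and no estimate beyond Carleman's criterion is needed, so I expect no substantive obstacle.
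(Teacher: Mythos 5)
Your proposal is correct and follows essentially the same route as the paper: normalize the rank to define $X_n$, use Theorem \ref{thm:distributionAsymp} together with the symmetry relation to show the moments converge to those of the standard normal, and invoke the Method of Moments (Billingsley, Theorem 30.2) to conclude convergence in distribution. Your explicit remark that the normal law is moment-determinate via Carleman's condition is a small point the paper leaves implicit, but otherwise the arguments coincide.
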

\begin{remark}
As a consequence of Corollary \ref{C:normal}, we have that, for $a,b\in\R$ with $a\leq b$,
$$
\lim_{n\to \infty} \frac{1}{u(n)} \left| \left\{ \sigma \in \cU(n) : a \leq \frac{\textnormal{rank}(\sigma)}{\left(\frac{6n}{\pi^2}\right)^{\frac14}} \le b \right\} \right| = \Phi(b) - \Phi(a),
$$
which tends to one as $b \to \infty$ and $a \to -\infty$. This means that for any $\varepsilon > 0$, ``almost all'' strongly unimodal sequences $\sigma$ have $|\rk(\sigma)| < n^{\frac14 + \varepsilon}$ (recall that the maximum value of the rank is roughly $\sqrt{|\sigma|}$).
\end{remark}

We can use Corollary \ref{C:normal} to determine the asymptotic behavior of the absolute moments for the rank. For $r \in \N_0$, define the {\it absolute moments}
$$
u_r^+(n) := \sum_{m \in\Z} |m|^r u(m,n).
$$
Note that the even absolute moments are already described by Theorem \ref{thm:distributionAsymp}, as $u_{2k}^+(n) =  u_{2k}(n)$.

\begin{corollary}
\label{C:posmoments}
As $n \to \infty$,
\begin{equation*}
\frac{u_r^+(n)}{u(n) \left(\frac{6n}{\pi^2}\right)^\frac{r}{4}} \sim \frac{2^{\frac{r}{2}}}{\sqrt{\pi}} \Gamma\left(\frac{r+1}{2}\right).
\end{equation*}
\end{corollary}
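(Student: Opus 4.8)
The plan is to recognize the right-hand side as the $r$-th absolute moment of the standard normal law and to deduce the statement from the distributional convergence in Corollary \ref{C:normal}, upgrading convergence in distribution to convergence of moments. To this end, for each $n$ let $X_n$ denote the random variable $\rk(\sigma)/\left(\frac{6n}{\pi^2}\right)^{1/4}$, where $\sigma$ is drawn uniformly from $\cU(n)$. Unwinding the definitions gives
$$\mathbb{E}\left[|X_n|^r\right] = \frac{1}{u(n)\left(\frac{6n}{\pi^2}\right)^{r/4}} \sum_{m\in\Z} |m|^r u(m,n) = \frac{u_r^+(n)}{u(n)\left(\frac{6n}{\pi^2}\right)^{r/4}},$$
so the corollary asserts exactly that $\mathbb{E}[|X_n|^r] \to \mathbb{E}[|Z|^r]$, where $Z$ is a standard normal variable. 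The target value is then the classical evaluation $\mathbb{E}[|Z|^r] = \frac{2^{r/2}}{\sqrt{\pi}}\Gamma\!\left(\frac{r+1}{2}\right)$, obtained from $\frac{2}{\sqrt{2\pi}}\int_0^\infty x^r e^{-x^2/2}\,dx$ by the substitution $u = x^2/2$.

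By Corollary \ref{C:normal} we have $X_n \Rightarrow Z$ in distribution. Since $x \mapsto |x|^r$ is continuous but unbounded, this does not by itself yield convergence of the $r$-th absolute moments; the standard remedy is to establish uniform integrability of the family $\{|X_n|^r\}_n$ and then invoke the theorem that convergence in distribution together with uniform integrability implies convergence of expectations (see \cite{billingsley}).

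To verify uniform integrability I would use Theorem \ref{thm:distributionAsymp}, which controls the even moments. Fix an integer $k$ with $2k > r$ and set $\epsilon := \frac{2k}{r} - 1 > 0$. Then $|X_n|^{r(1+\epsilon)} = |X_n|^{2k} = X_n^{2k}$, and Theorem \ref{thm:distributionAsymp} gives
$$\mathbb{E}\left[X_n^{2k}\right] = \frac{u_{2k}(n)}{u(n)\left(\frac{6n}{\pi^2}\right)^{k/2}} \longrightarrow (2k-1)!!,$$
so in particular $\sup_n \mathbb{E}[|X_n|^{r(1+\epsilon)}] < \infty$. This is precisely a standard sufficient condition (of de la Vall\'ee--Poussin type) ensuring that $\{|X_n|^r\}_n$ is uniformly integrable. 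Combining this with $X_n \Rightarrow Z$ yields $\mathbb{E}[|X_n|^r] \to \mathbb{E}[|Z|^r]$, which is the claim.

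The only real subtlety is the passage from convergence in distribution to convergence of moments; everything else is bookkeeping and a routine Gamma-integral evaluation. Thus the essential input is Theorem \ref{thm:distributionAsymp}, whose uniform control of all even moments supplies the higher-moment bound needed for uniform integrability. I expect this uniform-integrability step to be the main (though still routine) obstacle, since it is where the probabilistic content already established in the paper is reused; once it is in place, the result follows immediately.
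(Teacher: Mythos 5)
Your proposal is correct and follows essentially the same route as the paper: identify $\frac{2^{r/2}}{\sqrt{\pi}}\Gamma(\frac{r+1}{2})$ as $E[|Z|^r]$ for a standard normal $Z$, observe that the quotient in question is exactly $E[|X_n|^r]$, and deduce the limit from the moment convergence supplied by Theorem \ref{thm:distributionAsymp}. The one difference is that the paper simply cites Theorem \ref{thm:billingsley} to pass to the limit, even though that theorem as stated only yields convergence of $E[f(X_n)]$ for \emph{bounded} continuous $f$, whereas $|x|^r$ is unbounded; your explicit uniform-integrability argument, using the convergence of a higher even moment $E[X_n^{2k}]$ with $2k>r$ from Theorem \ref{thm:distributionAsymp}, is precisely the standard way to close that gap, so your write-up is in fact slightly more careful than the paper's at this point.
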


\begin{remark} Unlike Theorem \ref{thm:unimodalAsymp}, where we can obtain an asymptotic expansion in $n^{-\frac12}$ with an arbitrary number of terms using Wright's Circle Method, we do not have any control over the error terms in Corollary \ref{C:posmoments} due to the weaker notions of convergence used in the Method of Moments.
\end{remark}

The paper is organized as follows.
In Section \ref{sec:GeneratingFunction} we consider the combinatorial properties of strongly unimodal sequences and related generating functions, and prove Theorem \ref{thm:fixedm}, Corollary \ref{cor:ospt}, and Theorem
\ref{thm:ospt}.
In Section \ref{sec:Unimodal} we establish Theorem \ref{thm:unimodalAsymp}, giving an asymptotic
version of Conjecture \ref{pgeospt}.
In Section \ref{sec:Distribution}
we determine the asymptotic behavior of the moments of the rank statistic, proving Theorem \ref{thm:distributionAsymp}, Corollary \ref{C:normal}, and Corollary \ref{C:posmoments}.
Finally, in Section \ref{sec:Modularity} we discuss the modularity properties of the generating function for strongly unimodal sequences, and the relation to previously studied examples of mock modular and quantum modular forms.

\section*{Acknowledgments}
The authors thank Don Zagier and Sander Zwegers for insightful discussions
concerning quantum modular properties of the functions $U_m$ defined in
\eqref{defineUm}, and we thank Irfan Alam for discussion on the Method of Moments.
Moreover we thank the referee for helpful comments on an earlier version of this paper,
which greatly improved the exposition.

\section{Proof of Theorem \ref{thm:fixedm}, Corollary \ref{cor:ospt}, and Theorem \ref{thm:ospt}}\label{sec:GeneratingFunction}
In this section, we give some basic results for the generating function $U(w;q)$.
We prove Theorem \ref{thm:fixedm},  Corollary \ref{cor:ospt}, and Theorem \ref{thm:ospt}.

\begin{proof}[Proof of Theorem \ref{thm:fixedm}]	
From Entry 3.4.7 of \cite{AB} (or equivalently Theorem 4 of \cite{Choi})
and Lemma 7.9 of \cite{Ga}, one can conclude the following identity,
\begin{align}
\label{E:UAppell}
U(w;q) (q)_\infty
=&- \frac{1}{1+w^{-1}} \( \sum_{n\in\mathbb Z\backslash \{0\}  } \frac{(-1)^n q^{\frac{n(3n+1)}{2}}}{1+wq^n}
 -  \sum_{n\in\mathbb Z\backslash \{0\} } \frac{w^{-n} q^{\frac{n(n+1)}{2}}}{1+wq^n} \).
\end{align}
We let $[w^m]F(w;q)$ denote the coefficient of $w^m$ in $F(w;q)$, where $F(w;q)$ is a
series in $w$ and $q$. Our goal is to determine $U_m(q) = [w^m]U(w;q)$.
In order guarantee the absolute convergence of the various $q$-series that appear throughout
the proof, we henceforth assume that $0 < |q| < |w| < 1$.

We begin by considering the first summation in \eqref{E:UAppell}, and expand both denominators
as geometric series to obtain
\begin{align*}
\frac{1}{1+w^{-1}} & \sum_{n\in\mathbb Z\backslash \{0\}  } \frac{(-1)^n q^{\frac{n(3n+1)}{2}}}{1+wq^n}
=
	\frac{w}{1+w} \left(\sum_{n\geq 1} \frac{(-1)^n q^\frac{n(3n+1)}{2}}{1+wq^n}
	+ w^{-1} \sum_{n\geq 1} \frac{(-1)^n q^\frac{n(3n+1)}{2}}{1+w^{-1}q^n}\right)
\\
&= 	
	\sum_{j,\ell \geq 0, n\geq 1}
		(-1)^{n+j+\ell} w^{j+\ell+1} q^{\frac{n(3n+1)}{2} + nj}
	+
	\sum_{j,\ell\geq 0, n\geq 1}
		(-1)^{n+j+\ell} w^{-j+\ell} q^{\frac{n(3n+1)}{2} + nj}
\\
&=
	\sum_{n\geq 1, j\geq 0, \ell\ge j+1}
		(-1)^{n+\ell+1} w^{\ell} q^{\frac{n(3n+1)}{2} + nj}
	+
	\sum_{n\geq 1, j\geq 0, \ell\ge -j }
		(-1)^{n+\ell} w^{\ell} q^{\frac{n(3n+1)}{2} + nj}
.
\end{align*}
Thus, for $m\ge0$, we have
\begin{align}\label{E:U1stsum}
[w^m]\frac{1}{1+w^{-1}} & \sum_{n\in\mathbb Z\backslash \{0\}  } \frac{(-1)^n q^{\frac{n(3n+1)}{2}}}{1+wq^n}
=
	\sum_{n\ge1}(-1)^{n+m}q^{\frac{n(3n+1)}{2}}\left(
		-\sum_{0\le j\le m-1}q^{nj}
		+\sum_{j\ge0}q^{nj}	
	\right)
\nonumber\\
&=
	\sum_{n\ge1}(-1)^{n+m}q^{\frac{n(3n+1)}{2}}
	\sum_{j\ge m}q^{nj}
=
	\sum_{n\ge1}\frac{(-1)^{n+m}q^{\frac{n(3n+1)}{2}+nm}}{1-q^n}
.
\end{align}

The second sum from \eqref{E:UAppell} is expanded in a similar manner.
Again using the geometric series, we obtain
\begin{align*}
\frac{1}{1+w^{-1}} & \sum_{n\in\mathbb Z\backslash \{0\}  } \frac{w^{-n} q^{\frac{n(n+1)}{2}}}{1+wq^n}
=
	\frac{w}{1+w} \left(\sum_{n=1}^\infty \frac{w^{-n} q^{\frac{n(n+1)}{2}}}{1+wq^n}
		+ w^{-1}\sum_{n=1}^\infty \frac{w^nq^{\frac{n(n+1)}{2}}}{1+w^{-1}q^n}\right)
\\
&=
	\sum_{j,\ell\ge 0, n\ge1}(-1)^{j+\ell}w^{-n+\ell+j+1}q^{\frac{n(n+1)}{2}+nj}
	+
	\sum_{j,\ell\ge 0, n\ge1}(-1)^{j+\ell}w^{n+\ell-j}q^{\frac{n(n+1)}{2}+nj}
\\
&=
	\sum_{n\ge1, j\ge 0, \ell\ge-n+j+1 }(-1)^{n+\ell+1}w^{\ell}q^{\frac{n(n+1)}{2}+nj}
	+
	\sum_{n\ge1, j\ge 0, \ell\ge n-j }(-1)^{n+\ell}w^{\ell}q^{\frac{n(n+1)}{2}+nj}
.
\end{align*}
Thus, for $m\ge0$, we have
\begin{align}\label{E:U2ndsum}
[w^m]\frac{1}{1+w^{-1}} & \sum_{n\in\mathbb Z\backslash \{0\}  } \frac{w^{-n} q^{\frac{n(n+1)}{2}}}{1+wq^n}
=
	\sum_{n\ge1} (-1)^{n+m}q^{\frac{n(n+1)}{2}}\left(
		-\sum_{0\le j\le m+n-1}q^{nj}
		+\sum_{j\ge \max(0,n-m)}q^{nj}
	\right)
\nonumber\\
&=
	-\sum_{n\ge1} \frac{(-1)^{n+m}q^{\frac{n(n+1)}{2}}\left(1-q^{n(n+m)}\right) }{1-q^n}
	+
	\sum_{1\le n\le m} \frac{(-1)^{n+m}q^{\frac{n(n+1)}{2}}}{1-q^n}
	\nonumber\\&\quad
	+
	\sum_{n\ge m+1} \frac{(-1)^{n+m}q^{\frac{n(n+1)}{2}+n(n-m)} }{1-q^n}
\nonumber\\
&=
	\sum_{n\ge1} \frac{(-1)^{n+m}q^{\frac{n(3n+1)}{2}+nm}}{1-q^n}
	+
	\sum_{n\ge m+1} \frac{(-1)^{n+m}q^{\frac{n(n+1)}{2}} \left(q^{n(n-m)}-1\right) }{1-q^n}
.
\end{align}

By equations \eqref{E:UAppell}, \eqref{E:U1stsum}, and \eqref{E:U2ndsum} we find that
for $m\ge0$,
\begin{align}
\label{E:Umfinal}
\sum_{n\ge1} u(m,n)q^n
&=
	[w^m]U(w;q)
=
	\frac{1}{(q)_\infty}
	\sum_{n\ge m+1} \frac{(-1)^{n+m}q^{\frac{n(n+1)}{2}} \left(q^{n(n-m)}-1\right) }{1-q^n}
\notag
\\
&=
	\frac{q^{\frac{m(m+1)}{2}}}{(q)_\infty}
	\sum_{n\ge 1} \frac{(-1)^{n}q^{\frac{n(n+1)}{2}+nm} \left(q^{n(n+m)}-1\right) }{1-q^{n+m}}
,
\end{align}
which is the claimed expression for $U_m(q)$.

Now observe that the $n=1$ term in the sum from \eqref{E:Umfinal} reduces to $q^{m+1}$, and thus
\begin{align*}
\sum_{n\ge 1} \frac{(-1)^{n}q^{\frac{n(n+1)}{2}+nm} \left(q^{n(n+m)}-1\right) }{1-q^{n+m}}
&=
q^{m+1} +O\left(q^{2m+3}\right).
\end{align*}
This implies that
\begin{align*}
\sum_{n\ge1} u(m,n)q^n
&=
\frac{q^{\frac{(m+1)(m+2)}{2}}}{(q)_\infty}
\left(1+O\left(q^{m+2}\right)\right),
\end{align*}
and thus
$u(m, \frac12(m+2)(m+1) + n) = p(n)$ for $ 0 \le n \le m+1$.
\end{proof}

We immediately obtain the relation between unimodal sequences and the ospt-function.
\begin{proof}[Proof of Corollary \ref{cor:ospt}]
The proof follows directly from Theorem \ref{thm:fixedm} using the identity
\begin{equation*}
\sum_{n\geq0} \ospt(n) q^n = \frac{1}{(q)_\infty} \sum_{n \geq 1} \left(\frac{(-1)^{n+1} q^{\frac{n(n+1)}{2}}}{1-q^n} -
\frac{(-1)^{n+1} q^{\frac{n(3n+1)}{2}}}{1-q^n}\right),
\end{equation*}
which is Theorem 1 of \cite{ACK}.
\end{proof}

We conclude this section with the proof of Theorem \ref{thm:ospt}.

\begin{proof}[Proof of Theorem \ref{thm:ospt}]
Define a subset of pairs of partitions into distinct parts by $\sS := \{ (\mu, \nu):  \ell(\mu) = \ell(\nu) + 1\} \cup (\emptyset, \emptyset).$ There is a simple injection that maps a strongly unimodal sequence with rank zero to $\sS$. In particular, suppose that $\sigma = \{a_1, \dots, a_k, \dots, a_{2k-1}\}$ has peak $a_k$, and define $(\mu, \nu) \in \sS$ by
$$
\mu := (a_{k}, a_{k-1}, \dots, a_1), \quad \nu := (a_{k+1}, \dots, a_{2k-1}).
$$
This is an invertible injection, as its image consists of all $(\mu, \nu) \in \sS$
such that the largest part in $\mu$ is larger than all parts in $\nu$. Consider the generating function
$$
S(q) := \sum_{n \geq 0} s(n) q^n = \sum_{(\mu, \nu) \in \sS} q^{|\mu| + |\nu|} = 1 + \sum_{n \geq 1} \frac{q^{\frac{n(n+1)}{2}}}{(q)_{n}} \frac{q^{\frac{n(n-1)}{2}}}{(q)_{n-1}},
$$
and recall the following representation for the generating functions for partitions (equation (2.2.9) in \cite{And98})
and partitions with crank zero (Theorem 5 of \cite{Ka}):
\begin{align*}
\sum_{n \geq 1} p(n) q^n = \sum_{n \geq 1} \frac{q^{n^2}}{(q)_n^2},\qquad \quad \sum_{n \geq 1} M(0,n) q^n = (1-q)\sum_{n \geq 1} \frac{q^{n^2+2n}}{(q)_n^2}.
\end{align*}
Then
\begin{align*}
\sum_{n\ge1} p(n)q^n-2\sum_{n\ge1}s(n)q^n
&=
	\sum_{n\ge1}\frac{q^{n^2}}{(q)_n^2}(1-2(1-q^n))=
	-\sum_{n\ge1}\frac{q^{n^2}}{(q)_{n-1}^2}
	+\sum_{n\ge1}\frac{q^{n^2+2n}}{(q)_{n}^2}
\\
&=
	-\sum_{n\ge0}\frac{q^{n^2+2n+1}}{(q)_{n}^2}
	+\sum_{n\ge1}\frac{q^{n^2+2n}}{(q)_{n}^2}
=
	-q+(1-q)\sum_{n\ge1}\frac{q^{n^2+2n}}{(q)_{n}^2}\\
	&=
	-q+\sum_{n\ge1}M(0,n)q^n
.
\end{align*}
Thus, for $n\ge2$,
\begin{align}\label{E:psIden}
s(n)&=\frac{p(n)-M(0,n)}{2}.
\end{align}
In particular, for $n\ge2$ we have the inequality
\begin{align*}
u(0,n) \leq s(n) = \frac{p(n)-M(0,n)}{2}.
\end{align*}

\end{proof}

\begin{remark}
It is also not difficult to achieve minor improvements of our results by describing the image in $\sS$ more precisely;
for example, by considering partitions in $\sS$ of the form
$\mu=(j,1)$ and $\nu=(n-j-1),$ for $2 \leq j \leq \lfloor\frac{n-1}{2}\rfloor$, we obtain
$u(0,n)\le s(n)-\lfloor\frac{n-1}{2}\rfloor+1$
for $n\ge4$.
However, such special cases do not seem to lead to a qualitative improvement of the bound.
We can also determine the asymptotic relationship between $s(n)$ and $\ospt(n)$.
Using \eqref{E:psIden}, we find that $s(n) \sim \frac12 p(n)$, since it is known that
$M(0,n)\sim \frac{\pi p(n)}{4\sqrt{6n}}$ \cite[Corollary 2.1]{KKS},
and so $\ospt(n) \sim \frac12 s(n)$.
\end{remark}

\section{Proof of Theorem \ref{thm:unimodalAsymp}}
 \label{sec:Unimodal}
Our primary goal in this section is to derive the first several terms in the asymptotic expansion for the coefficients of $U_m(q)$, which we achieve using Wright's variant of the Hardy-Ramanujan Circle Method \cite{Wri33, Wri71}.  The proof begins with the determination of the first terms in the asymptotic expansion of the generating function in Section \ref{sec:Unimodal:series}, and then proceeds by estimating its coefficients using a contour integral in Section \ref{sec:Unimodal:coeff}. As before we only consider non-negative $m$ throughout.

\subsection{Asymptotic expansions of generating functions}
\label{sec:Unimodal:series}
Recall Theorem \ref{thm:fixedm} and define
\begin{equation*}
V_m(q) := (q)_\infty U_m(q).
\end{equation*}

The bulk of this section is devoted to determining the asymptotic behavior of $V_m(q)$. We recall a formula for the asymptotic expansion of a series that is a consequence of the Euler-MacLaurin summation formula (here $\b a \in\R^r$, $w\in\C$ with ${\rm Re}(w)>0$, and $F:\C^r\to\C$ is a $\mathcal C^\infty$-function which, along with all of its derivatives, is of rapid decay)
\begin{align}
	\label{E:EulerMac}
\sum_{\b{n} \in \N_0^r} F((&\b{n} + \b{a})w)
\sim
	(-1)^{r}\sum_{\b{n} \in \N_0^r}
	F^{(n_1,\dotsc,n_r)}(\b{0})
	\prod_{j\in\{1,\ldots,r\}} \frac{B_{n_j+1}(a_j)}{(n_j+1)!} w^{n_j}
	\\ \notag
	&+
	\sum_{\mathscr S\subsetneq\{1,\ldots,r\}} \frac{(-1)^{|\mathscr S|}}{w^{r - |\mathscr{S}|}}
	\sum_{\substack{n_j \in \N_0,\\j\in\mathscr{S}}} 	
	\int_{[0,\infty)^{r-|\mathscr S|}} 	
	\left[\prod_{j\in\mathscr S} \frac{\partial^{n_j}}{\partial x_j^{n_j}} F(\b x)\right]_{\substack{x_j=0,\\j\in\mathscr{S}}} 	
	\prod_{k\not\in\mathscr S} dx_k
	\prod_{j\in\mathscr S} \frac{B_{n_j+1}(a_j)}{(n_j+1)!} w^{n_j} ,
\end{align}
where $B_n(x)$ denotes the $n$-th Bernoulli polynomial and throughout the paper we write vectors in bold letters and their components with subscripts. For clarity, we note that $\mathscr{S}$ in \eqref{E:EulerMac} runs over
all proper subsets of $\{1,\dotsc,r\}$, including the empty set.
In particular, the one-dimensional case reduces to
\begin{align*}
\sum_{n \in \N_0} F((n + a)t)
&\sim
	\frac{1}{t}\int_{0}^\infty F(x)dx
	-\sum_{n\geq 0}
	\frac{B_{n+1}(a)}{(n+1)!}F^{(n)}(0) t^{n}
.
\end{align*}

The following proposition gives the first few terms in the asymptotic expansion of $V_m$.

\begin{proposition}
\label{P:Vm}
Suppose that $m \in\mathbb N_0.$  Then as $\tau \to 0$
\begin{equation*}
V_m(q) =\frac14+\left(\frac{m^2}{8}-\frac18\right) 2\pi i\tau + O\left(|\tau|^2\right).
\end{equation*}
\end{proposition}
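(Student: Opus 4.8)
The plan is to obtain the small-$\tau$ expansion of $V_m=(q)_\infty U_m$ directly from the finite form of the generating function in Theorem~\ref{thm:fixedm} by recasting it as a two-dimensional lattice sum and applying the Euler--Maclaurin formula \eqref{E:EulerMac}. First I would start from \eqref{E:Umfinal} and use $\frac{q^{n(n-m)}-1}{1-q^n}=-\sum_{j=0}^{n-m-1}q^{jn}$ (valid for $n\ge m+1$) to write $V_m(q)=\sum_{n\ge m+1}\sum_{j=0}^{n-m-1}(-1)^{n+m+1}q^{\frac{n(n+1)}{2}+jn}$. Reindexing the triangular region by $n=i+j+m+1$ with $(i,j)\in\N_0^2$ turns this into a clean sum over the quadrant,
\[
V_m(q)=\sum_{(i,j)\in\N_0^2}(-1)^{i+j}\,q^{g(i,j)},
\]
and completing the square shows $g(i,j)=\tfrac12 Q\!\left(i+m+\tfrac12,\,j+\tfrac12\right)$ with $Q(x,y):=x^2+4xy+3y^2$; in particular the minimum value of $g$ over the shifted lattice is $0$. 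Although $Q$ is indefinite on $\R^2$, it is positive on the closed first quadrant (indeed $Q(x,y)\ge x^2+y^2$ there, since $Q-(x^2+y^2)=4xy+2y^2\ge0$), so writing $q=e^{-t}$ with $t:=-2\pi i\tau$, the function $F(\b x):=e^{-Q(\b x)}$ and all of its derivatives decay on the orthant, which is all that the derivation of \eqref{E:EulerMac} requires.

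Next I would remove the sign $(-1)^{i+j}$ by separating $(i,j)$ into the four residue classes modulo $2$. On each class the summand becomes $(-1)^{\epsilon_1+\epsilon_2}F\big((\b n+\b a^{(\epsilon)})w\big)$ with $w:=\sqrt{2t}$ and half-integer shifts $a_1^{(\epsilon)}=\tfrac12(\epsilon_1+m+\tfrac12)$, $a_2^{(\epsilon)}=\tfrac12(\epsilon_2+\tfrac12)$, so \eqref{E:EulerMac} applies with $r=2$. The crucial observation is that upon summing the four classes against the signs $(-1)^{\epsilon_1+\epsilon_2}$, every contribution coming from a proper subset $\mathscr S\subsetneq\{1,2\}$ cancels: the $\mathscr S=\emptyset$ term (which carries the divergent factor $w^{-2}$) is independent of $\b a$, while the $\mathscr S=\{1\}$ and $\mathscr S=\{2\}$ terms each depend on only one of $\epsilon_1,\epsilon_2$, so the corresponding parity sum contains the factor $\sum_{\epsilon_i\in\{0,1\}}(-1)^{\epsilon_i}=0$. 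Hence all negative powers of $t$ disappear and only the fully discrete term ($\mathscr S=\{1,2\}$) survives, yielding the finite expression
\[
V_m(q)=\sum_{\epsilon\in\{0,1\}^2}(-1)^{\epsilon_1+\epsilon_2}\sum_{n_1,n_2\ge0}F^{(n_1,n_2)}(\b 0)\,\frac{B_{n_1+1}(a_1^{(\epsilon)})}{(n_1+1)!}\,\frac{B_{n_2+1}(a_2^{(\epsilon)})}{(n_2+1)!}\,w^{n_1+n_2}.
\]

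It then remains to read off the coefficients of $\tau^0$ and $\tau^1$. Since $Q$ is even, $e^{-Q}$ has only even-order Taylor coefficients at the origin, so $F^{(n_1,n_2)}(\b 0)=0$ whenever $n_1+n_2$ is odd; as $w^{n_1+n_2}=(2t)^{(n_1+n_2)/2}$, only integer powers of $t$ (equivalently of $\tau$) occur, matching the shape of the claim. The $n_1+n_2=0$ term reduces, via $B_1(x)=x-\tfrac12$ and the factorization of the parity sum, to $\tfrac14\big(\sum_{\epsilon_1}(-1)^{\epsilon_1}(a_1^{(\epsilon)}-\tfrac12)\big)\big(\sum_{\epsilon_2}(-1)^{\epsilon_2}(a_2^{(\epsilon)}-\tfrac12)\big)=\tfrac14$, the constant term. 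The linear term comes from $n_1+n_2=2$, which needs $F^{(2,0)}(\b0)=-2$, $F^{(1,1)}(\b0)=-4$, $F^{(0,2)}(\b0)=-6$ together with the values of $B_2,B_3$ at the shifts; the $m$-dependence enters only through $a_1^{(\epsilon)}$, and collecting the three contributions gives the coefficient $\tfrac{1-m^2}{8}$ of $t$, i.e. $\left(\tfrac{m^2}{8}-\tfrac18\right)2\pi i\tau$ after substituting $t=-2\pi i\tau$.

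The main obstacle is conceptual rather than computational: one must set up the lattice-sum reformulation and the parity splitting so that the singular Euler--Maclaurin terms cancel, and one must justify applying \eqref{E:EulerMac} to $F=e^{-Q}$ despite $Q$ being only orthant-positive (checking convergence of each partial integral $\int_{[0,\infty)^{2-|\mathscr S|}}$ on the boundary faces). Once this structure is in place, extracting the stated constant and linear coefficients is a finite, if slightly involved, Bernoulli-polynomial computation.
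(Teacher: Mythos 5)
Your proposal is correct and follows essentially the same route as the paper: after your reindexing $n=i+j+m+1$, the quadrant sum is exactly the paper's formula \eqref{VmPartial}, and the parity splitting into four classes, the cancellation of every Euler--Maclaurin contribution with $\mathscr S\subsetneq\{1,2\}$ via the $(-1)^{\epsilon_1+\epsilon_2}$ factor, and the extraction of the constant and linear coefficients through $B_1,B_2,B_3$ all match the published argument (which uses $f(\b x)=e^{-2x_1^2-6x_2^2-8x_1x_2}$ with step $\sqrt{-2\pi i\tau}$, a harmless rescaling of your $F=e^{-Q}$ with $w=\sqrt{2t}$). The only blemish is the spurious prefactor $\tfrac14$ in your displayed constant-term computation --- the two parity sums each equal $-\tfrac12$, so their product alone already gives $\tfrac14$ --- but this does not affect your (correct) final coefficients.
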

\begin{proof}
Using finite geometric series, we deduce that
\begin{align}\label{VmPartial}
V_m(q)
&=
\sum_{n_1,n_2\geq0} (-1)^{n_1+n_2}
q^{ \frac{1}{2}\left(n_1+m+\frac{1}{2}\right)^2 + \frac{3}{2}\left(n_2+\frac{1}{2}\right)^2
	+ 2\left(n_1+m+\frac{1}{2}\right)\left(n_2+\frac{1}{2}\right) }
.
\end{align}
We then write
\begin{align}\notag
V_m\left(e^{2\pi i\tau}\right)
&=\sum_{\varepsilon_1,\varepsilon_2\in\{0,1\}} (-1)^{\varepsilon_1+\varepsilon_2} \sum_{n_1,n_2\geq 0} f\left(\sqrt{-2\pi i\tau}\left(n_1+\frac{m}{2}+\frac14+\frac{\varepsilon_1}{2},n_2+\frac14+\frac{\varepsilon_2}{2}\right)\right),\label{VmEulerMcLaurin}
\end{align}
where $f(\b x):=e^{-2x_1^2-6x_2^2-8x_1x_2}$.
We now apply \eqref{E:EulerMac} to the sum on $n_1, n_2$. All terms except those corresponding to the first vanish due to the $(-1)^{\varepsilon_1 + \varepsilon_2}$-factor, and we are therefore left with
\begin{align*}
&V_m\left(e^{2\pi i\tau}\right)=\sum_{\varepsilon_1,\varepsilon_2\in\{0,1\}} (-1)^{\varepsilon_1 + \varepsilon_2} \sum_{n_1,n_2\geq 0} B_{n_1+1}\left(\frac{m}{2}+\frac14+\frac{\varepsilon_1}{2}\right) B_{n_2+1}\left(\frac14+\frac{\varepsilon_2}{2}\right)\\
&\hspace{9.5cm}\times\frac{f^{(n_1,n_2)}(\b 0)}{(n_1+1)!(n_2+1)!}(-2\pi i\tau)^{\frac{n_1+n_2}{2}}.
\end{align*}
Using the facts that $B_n(x)=(-1)^nB_n(1-x)$ and $f^{(n_1,n_2)}(\b 0)=0$ unless $n_1\equiv n_2 \pmod{2}$, we obtain
\[
V_m\left(e^{2\pi i\tau}\right)=2\sum_{n_1,n_2\geq0}\frac{B_{2n_2+1}\left(\frac14\right)\left(B_{2n_1+1}\left(\frac{m}{2}+\frac14\right)-B_{2n_1+1}\left(\frac{m}{2}+\frac34\right)\right)}{(2n_2+1)!(2n_1+1)!}f^{(2n_1,2n_2)}(\b 0)(-2\pi i\tau)^{n_1+n_2}.
\]
Computing the first few terms yields the claim.
\end{proof}
Proposition \ref{P:Vm} enables us to determine the asymptotic behavior of $U_m$ near $q=1$.
\begin{proposition}
\label{P:UmAsymp}
Assume that $\tau=u+iv$, $v=\frac{1}{2\sqrt{6n}}$ and $|u|\leq v$. As $n\to\infty$ we have
\begin{align*}
U_m(q)=\sqrt{-i\tau}e^{\frac{\pi i}{12\tau}}\left(\frac14+\pi i\left(\frac{m^2}{4}-\frac{11}{48}\right)\tau\right)+O\left(n^{-\frac54}e^{\pi\sqrt{\frac{n}{6}}}\right).
\end{align*}
\end{proposition}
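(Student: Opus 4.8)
The plan is to combine Proposition~\ref{P:Vm} with the modular transformation of Dedekind's $\eta$-function. Writing $U_m(q) = V_m(q)/(q)_\infty$, the numerator is already controlled by Proposition~\ref{P:Vm}, so the remaining task is to expand $1/(q)_\infty$ as $\tau \to 0$ along the cone $|u|\le v$ with $v = \frac{1}{2\sqrt{6n}}$, and then to multiply the two expansions together while tracking error terms.

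First I would invoke the transformation $\eta\left(-\frac1\tau\right) = \sqrt{-i\tau}\,\eta(\tau)$ together with $(q)_\infty = e^{-\frac{\pi i\tau}{12}}\eta(\tau)$. Setting $\widetilde\tau := -\frac1\tau$, the product expansion $\eta(\widetilde\tau) = e^{\frac{\pi i\widetilde\tau}{12}}\left(1 + O\left(e^{2\pi i\widetilde\tau}\right)\right)$ yields
\begin{equation*}
\frac{1}{(q)_\infty} = \sqrt{-i\tau}\, e^{\frac{\pi i}{12\tau}}\, e^{\frac{\pi i\tau}{12}}\left(1 + O\left(e^{-2\pi\sqrt{6n}}\right)\right),
\end{equation*}
where the error comes from $\left|e^{2\pi i\widetilde\tau}\right| = e^{-2\pi\,\mathrm{Im}(\widetilde\tau)}$ and the bound $\mathrm{Im}(\widetilde\tau) = \frac{v}{|\tau|^2} \ge \frac{1}{2v} = \sqrt{6n}$, valid since $|u|\le v$ gives $|\tau|^2 \le 2v^2$. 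This error is superexponentially small and hence negligible against every other term.

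Next I would expand $e^{\frac{\pi i\tau}{12}} = 1 + \frac{\pi i\tau}{12} + O\left(|\tau|^2\right)$ and multiply by $V_m(q) = \frac14 + \left(\frac{m^2}{4} - \frac14\right)\pi i\tau + O\left(|\tau|^2\right)$, which is Proposition~\ref{P:Vm} rewritten. Collecting the constant and linear-in-$\tau$ contributions produces the coefficients $\frac14$ and $\pi i\left(\frac{1}{48} + \frac{m^2}{4} - \frac14\right) = \pi i\left(\frac{m^2}{4} - \frac{11}{48}\right)$, exactly matching the asserted expansion. For the error, I would use that on the cone $\left|e^{\frac{\pi i}{12\tau}}\right| = e^{\frac{\pi v}{12(u^2+v^2)}} \le e^{\frac{\pi}{12v}} = e^{\pi\sqrt{\frac{n}{6}}}$ (using $u^2+v^2 \ge v^2$) and $|\tau| \asymp v \asymp n^{-\frac12}$; thus the leftover $O\left(|\tau|^2\right)$ terms, once multiplied by $\sqrt{-i\tau}\,e^{\frac{\pi i}{12\tau}}$, contribute $O\left(|\tau|^{\frac52}e^{\pi\sqrt{\frac{n}{6}}}\right) = O\left(n^{-\frac54}e^{\pi\sqrt{\frac{n}{6}}}\right)$, the claimed bound.

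The main obstacle is not the algebra of collecting coefficients but ensuring uniformity: the $O\left(|\tau|^2\right)$ error in Proposition~\ref{P:Vm} must hold uniformly over the two-dimensional cone $|u|\le v$, not merely along a single ray $\tau \to 0$, so that all error estimates can be dominated by the one uniform exponential factor $e^{\pi\sqrt{\frac{n}{6}}}$. Verifying this uniform decay of the Euler--Maclaurin remainder on the cone (and confirming that the bound $\left|e^{\frac{\pi i}{12\tau}}\right| \le e^{\pi\sqrt{\frac{n}{6}}}$ is attained as an upper envelope) is the step requiring the most care; everything else reduces to the $\eta$-transformation and routine Taylor expansion.
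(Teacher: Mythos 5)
Your proposal is correct and follows essentially the same route as the paper: both write $U_m = V_m/(q)_\infty$, apply the $\eta$-transformation to get $\frac{1}{(q)_\infty}=\sqrt{-i\tau}\,e^{\frac{\pi i}{12}(\tau+\frac1\tau)}(1+O(e^{-2\pi\sqrt{6n}}))$, expand $e^{\frac{\pi i\tau}{12}}$, and multiply against Proposition~\ref{P:Vm}, arriving at the coefficient $\pi i(\frac{m^2}{4}-\frac{11}{48})$ and the error $O(|\tau|^{\frac52}e^{\pi\sqrt{\frac n6}})=O(n^{-\frac54}e^{\pi\sqrt{\frac n6}})$. Your added remarks on uniformity over the cone $|u|\le v$ are sound and simply make explicit what the paper leaves implicit.
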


\begin{proof}
From the well-known transformation law of the $\eta$-function (e.g., Theorem 3.1 in \cite{Apo90}), one directly concludes the asymptotic formula
\begin{equation}
\label{E:1/qinfty}
\frac{1}{(q)_\infty}=\sqrt{-i\tau} e^{\frac{\pi i}{12}\left(\tau+\frac1{\tau}\right)}\left(1+O\left(e^{-2\pi\sqrt{6n}}\right)\right).
\end{equation}
Expanding $e^{\frac{\pi i\tau}{12}}$ gives the claim.
\end{proof}

We next bound $U_m$ away from the dominant pole $q=1$.
Here, as usual, for sequences $f_n$ and $g_n$ the notation $f_n\ll g_n$ means
that $|f_n|\le c|g_n|$, for sufficiently large $n$ and $c$ a constant.
\begin{proposition}
\label{nondbound}
If $v=\frac1{2\sqrt{6n}}$ and $v\leq |u|\leq\frac12$, then for some $\delta <1 $
\[
\left\lvert  U_m(q)\right\rvert \ll e^{\pi\delta \sqrt{\frac{n}{6}}}.
\]
\end{proposition}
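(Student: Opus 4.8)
The plan is to bound $\lvert U_m(q)\rvert$ when $q=e^{2\pi i\tau}$ lies on the minor arc, i.e.\ $v=\frac{1}{2\sqrt{6n}}$ and $v\le\lvert u\rvert\le\frac12$. Since $U_m(q)=V_m(q)/(q)_\infty$ and we already control the growth of $1/(q)_\infty$ near $q=1$, the heart of the matter is to show that the factor $1/(q)_\infty$ is exponentially smaller on the minor arc than its value $\sim e^{\pi i/(12\tau)}$ near the dominant pole. First I would invoke the standard bound for the Dedekind $\eta$-function away from the real axis: writing $\frac{1}{(q)_\infty}=\sqrt{-i\tau}\,e^{\frac{\pi i}{12\tau}}\prod_{n\ge1}(1-\wt q^n)^{-1}$ with $\wt q=e^{-2\pi i/\tau}$ from the modular transformation, the real part of $\frac{\pi i}{12\tau}$ is $\frac{\pi v}{12\lvert\tau\rvert^2}$, and on the minor arc $\lvert\tau\rvert^2=u^2+v^2\ge v^2$ is bounded below by a constant multiple of $u^2$, so $\re{\tfrac{\pi i}{12\tau}}\le \frac{\pi}{12}\cdot\frac{v}{u^2+v^2}$ is strictly smaller than the main-arc value $\frac{\pi}{12v}=\frac{\pi\sqrt{6n}}{6}$ by a factor bounded away from $1$.

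The key estimate is therefore the elementary inequality
\begin{align*}
\re{\frac{i}{\tau}}=\frac{v}{u^2+v^2}\le\frac{1}{v}\,\frac{v^2}{u^2+v^2}\le\frac{2}{v}\cdot\frac{1}{2}=\frac{1}{v}\cdot\frac{v^2}{u^2+v^2},
\end{align*}
which on the minor arc $\lvert u\rvert\ge v$ gives $\frac{v^2}{u^2+v^2}\le\frac12$, hence $\re{\frac{i}{\tau}}\le\frac{1}{2v}=\sqrt{6n}$, i.e.\ exactly half the main-arc exponent. Consequently
$$\Bigl\lvert e^{\frac{\pi i}{12\tau}}\Bigr\rvert=e^{\frac{\pi}{12}\re{\frac{i}{\tau}}}\le e^{\frac{\pi}{12}\sqrt{6n}}=e^{\frac{\pi}{2}\sqrt{\frac{n}{6}}},$$
which already produces an exponent of the desired shape $e^{\pi\delta\sqrt{n/6}}$ with some $\delta<1$. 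I would then absorb the polynomially growing prefactor $\sqrt{-i\tau}\sim n^{-1/4}$ and the convergent infinite product $\prod(1-\wt q^n)^{-1}$ (which is $O(1)$ since $\lvert\wt q\rvert=e^{-2\pi v/\lvert\tau\rvert^2}<1$) into the constant, and finally bound the numerator $V_m(q)$: from the series in \eqref{VmPartial} and the fact that $\lvert q\rvert=e^{-2\pi v}<1$, $V_m(q)$ is uniformly $O(1)$ on the whole region, so it contributes nothing to the exponential rate.

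The main obstacle will be making the $\delta<1$ uniform and explicit across the full range $v\le\lvert u\rvert\le\frac12$, since the naive bound above degrades as $\lvert u\rvert\to v$. I expect to need a slightly more careful two-regime analysis: for $\lvert u\rvert$ bounded away from $0$ (say $\lvert u\rvert\ge\frac14$) the pole $q=1$ is far away and $\frac{1}{(q)_\infty}$ is genuinely $O(1)$, while for $v\le\lvert u\rvert\le\frac14$ the estimate $\re{\frac{i}{\tau}}\le\frac{v}{u^2+v^2}$ gives a rate strictly below $\sqrt{6n}$ that can be made uniform by bounding $\frac{v}{u^2+v^2}\le\frac{v}{2v^2}=\frac{1}{2v}$ on $\lvert u\rvert\ge v$ and checking that the worst case $\lvert u\rvert=v$ already yields $\delta=\frac12$. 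Choosing $\delta$ to be the resulting supremum over the arc (a constant strictly less than $1$) completes the bound. The remaining care is simply to verify that all implied constants are independent of $m$, which follows because $m$ enters $V_m$ only through bounded Bernoulli-polynomial coefficients and does not affect the dominant $\eta$-transformation factor.
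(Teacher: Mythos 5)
Your overall strategy---pulling the exponential growth of $1/(q)_\infty$ out via the modular transformation and checking that the exponent drops by a definite factor on the minor arc---is workable in outline, but the step where you dispose of the transformed infinite product contains a genuine error. You claim $\prod_{n\ge1}(1-\wt{q}^n)^{-1}$ is $O(1)$ ``since $|\wt{q}|<1$.'' Pointwise $|\wt{q}|<1$ is not enough: $|\wt{q}|=e^{-2\pi v/|\tau|^2}$, and on the part of the arc where $|u|$ is of order one (say $|u|=\tfrac12$) this is $e^{-8\pi v(1+o(1))}\to 1$ as $n\to\infty$, so the product is of size $\exp\bigl(\tfrac{\pi^2}{6(1-|\wt{q}|)}(1+o(1))\bigr)\approx e^{\frac{\pi}{4}\sqrt{n/6}}$---exponentially large, not bounded. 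The same misconception reappears in your fallback two-regime analysis, where you assert that $1/(q)_\infty$ is ``genuinely $O(1)$'' once $|u|\ge\tfrac14$: in fact $1/(q)_\infty$ has an essential singularity at \emph{every} root of unity, and near $q=-1$ it is of size $e^{\frac{\pi}{48v}}=e^{\frac{\pi}{4}\sqrt{n/6}}$. So neither regime of your argument is actually controlled as written. (A smaller imprecision: $V_m(q)$ is not uniformly $O(1)$ on the arc, only polynomially bounded, but that is harmless.)

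The gap is fixable along your lines: bounding $|1/(\wt{q})_\infty|$ by $1/(|\wt{q}|;|\wt{q}|)_\infty\approx\exp\bigl(\tfrac{\pi}{12}\cdot\tfrac{|\tau|^2}{v}\bigr)$ and adding this to your $\exp\bigl(\tfrac{\pi}{12}\cdot\tfrac{v}{|\tau|^2}\bigr)$ gives a total exponent $\tfrac{\pi}{12}(s+s^{-1})$ with $s=v/|\tau|^2\in[\,\approx 4v,\tfrac{1}{2v}]$, whose maximum over the arc is $\tfrac{\pi}{24v}+O(v)$, yielding any $\delta>\tfrac12$---but this extra estimate is exactly what is missing from your write-up. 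The paper sidesteps the issue with Wright's more elementary device: it bounds $\log|1/(q)_\infty|$ by $\log\bigl(1/(|q|;|q|)_\infty\bigr)$ minus the saving coming from the $m=1$ term of the logarithmic series, namely $\tfrac{|q|}{1-|q|}-\tfrac{|q|}{|1-q|}\gg\tfrac1v$, using only the inequality $|1-q|\ge|1-e^{2\pi i(v+iv)}|=2\sqrt2\,\pi v(1+O(v))$ valid on the whole range $v\le|u|\le\tfrac12$. That gives the weaker constant $\delta>1-\tfrac{6}{\pi^2}\bigl(1-\tfrac1{\sqrt2}\bigr)\approx 0.822$, but requires no control of $1/(q)_\infty$ near the other roots of unity, which is precisely the point your argument glosses over.
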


\begin{proof}
We estimate
\[
\left\lvert U_m(q)\right\rvert \ll \frac{1}{|(q)_\infty|}\sum_{n\geq 1} n|q|^{\frac{n^2}{2}}.
\]
The sum on $n$ can be bounded against $$\sum_{n \geq 1} n |q|^n = \frac{|q|}{(1-|q|)^2} \ll |\tau|^{-2}.$$

To estimate $\frac{1}{(q)_\infty}$, we follow Wright's argument from Lemma XVI of \cite{Wri33}. For the convenience of the reader we give the details. First, note that since $|q| < 1$, we have the logarithmic series expansion
\begin{equation*}
\Log{\left(\frac{1}{(q)_\infty}\right)} = \sum_{m \geq 1} \frac{q^m}{m\left(1 - q^m\right)}.
\end{equation*}
The magnitude of this expression is bounded by
\begin{align}
\label{E:log(q)}
\left| \Log\left(\frac{1}{(q)_\infty}\right)\right| \leq \sum_{m \geq 1} \frac{|q|^m}{m \left|1 - q^m\right|}
\leq \sum_{m \geq 1} \frac{|q|^m}{m\left(1 - |q|^m\right)}
- \left(\frac{|q|}{1 - |q|} - \frac{|q|}{|1 - q|}\right).
\end{align}
By \eqref{E:1/qinfty} the final sum in \eqref{E:log(q)} has an asymptotic expansion given by
\begin{equation}
\label{E:log|q|}
\sum_{m \geq 1} \frac{|q|^m}{m\left(1 - |q|^m\right)}
= \log\left(\frac{1}{(|q|;|q|)_\infty}\right)
= \frac{\pi}{12 v} + O(\log{(v)}).
\end{equation}
To estimate the remaining terms in \eqref{E:log(q)}, we compute Taylor series to obtain
\begin{align*}
1 - |q|  = 2 \pi v  \left(1 + O(v)\right),\quad
|1 - q|  = 2 \sqrt{2} \pi v  \left(1 + O(v)\right).
\end{align*}
Indeed, the second identity holds since
\begin{align*}
\left|1-e^{2\pi i(u+iv)}\right|^2&=
	1 - 2\cos(2\pi u)e^{-2\pi v} + e^{-4\pi v}
\geq
	1 - 2 \cos(2 \pi v) e^{-2\pi v} + e^{-4\pi v}\\
&= \left|1 - e^{2 \pi i (v + iv)}\right|^2,
\end{align*}
using the fact that $v \leq |u| \leq \frac12.$ The claim now follows by the Taylor expansion
$$
1 - 2 \cos(2 \pi v) e^{-2\pi v} + e^{-4\pi v}
= 8\pi^2 v^2 + O\left(v^3\right).
$$

Plugging into the last two terms of \eqref{E:log(q)} and combining with \eqref{E:log|q|} implies that
\begin{align*}
\log\left|\frac{1}{(q)_\infty}\right| &
\leq \log\left(\frac{1}{\left(|q|; |q|\right)_\infty}\right) - \frac{|q|}{1 - |q|} + \frac{|q|}{\left|1 - e^{2 \pi i (v + iv)}\right|} \\
& = \frac{\pi}{12 v} - \left(\frac{1}{2 \pi v} - \frac{1}{2 \sqrt{2} \pi v}\right) + O(\log{(v)})= \frac{\pi}{12 v}\left(\!1 - \!\frac{6}{\pi^2}\left(1 - \frac{1}{\sqrt{2}}\right)\!\right) + O(\log{(v)}).
\end{align*}
Thus the claim holds for any $1 - \frac{6}{\pi^2}(1 - \frac{1}{\sqrt{2}}) = 0.8219\ldots < \delta < 1.$
\end{proof}

\begin{remark}
The proof of Proposition \ref{nondbound} also corrects the proof of Corollary 3.4 in the published version of \cite{BM12TAMS}.
\end{remark}

\subsection{Asymptotic behavior of coefficients}
\label{sec:Unimodal:coeff}

Here we use a variant of the Circle Method due to Wright \cite{Wri71}. By Cauchy's Theorem, we obtain
\[
u(m, n)=\frac{1}{2\pi i} \int_{\mathcal{C}}\frac{ U_m(q)}{q^{n+1}}dq=\int_{-\frac12}^{\frac12}  U_m\left(e^{-\frac{\pi}{\sqrt{6n}}+2\pi iu}\right) e^{\pi\sqrt{\frac{n}{6}}-2\pi inu}du,
\]
where $\mathcal{C}$ denotes the circle with radius $e^{-\frac{\pi}{\sqrt{6n}}}$ surrounding the origin counterclockwise. We then split
\[
u(m, n)=I'(n)+I''(n)
\]
with
\begin{align*}
I'(n)&:=\int_{|u|\leq  \frac{1}{2\sqrt{6n}}}  U_m\left(e^{-\frac{\pi}{\sqrt{6n}}+2\pi iu}\right) e^{\pi\sqrt{\frac{n}{6}}-2\pi inu} du,\\
I''(n)&:=\int_{ \frac{1}{2\sqrt{6n}}\leq |u|\leq \frac{1}{2} } U_m\left(e^{-\frac{\pi}{\sqrt{6n}}+2\pi iu}\right) e^{\pi\sqrt{\frac{n}{6}}-2\pi inu} du.
\end{align*}
It turns out that $I'(n)$ contributes the asymptotic main term, whereas $I''(n)$ is part of the asymptotic error term.
To see this, we rewrite
\[
I'(n)=\frac{1}{2\sqrt{6n}}\int_{-1}^1 U_m\left(e^{\frac{\pi}{\sqrt{6n}}(-1+iu)}\right) e^{\pi\sqrt{\frac{n}{6}}(1-iu)}du.
\]
We next approximate $I'(n)$ by a Bessel function. For this define for $s\in\R$
\[
P_s(n):=\frac{1}{2\pi i}\int_{1-i}^{1+i} w^s e^{\pi\sqrt{\frac{n}{6}}\left(w+\frac1w\right)}dw.
\]
We then may write, using Proposition \ref{P:UmAsymp},
\begin{equation*}
I'(n)=\frac{\pi}{8\cdot 2^\frac14\cdot 3^{\frac34}\cdot n^{\frac34}}P_{\frac12}(n)-\frac{\pi^2\left(\frac{m^2}{4}-\frac{11}{48}\right)}{12\cdot 2^{\frac34}\cdot 3^{\frac14}\cdot n^{\frac54}}P_{\frac32}(n) + O\left(n^{-\frac74}e^{\pi\sqrt{\frac{2n}{3}}}\right).
\end{equation*}
We have (see \cite[Section 5]{Wri71}) the following approximation
\[
P_s(n)-I_{-s-1}\left(\pi\sqrt{\frac{2n}{3}}\right)\ll e^{\frac{3\pi}{2}\sqrt{\frac{n}{6}}},\qquad\textnormal{ as } n\to\infty.
\]

We next turn to bounding $I''(n)$.
Using Proposition \ref{nondbound} gives
\[
\left\lvert I''(n)\right\rvert\ll \int_{\frac{1}{2\sqrt{6n}}\leq |u|\leq \frac{1}{2}} \left|U_m\left(e^{-\frac{\pi}{\sqrt{6n}}+2\pi i u}\right)\right|e^{\pi \sqrt{\frac{n}{6}}} du\ll e^{(1 + \delta) \pi \sqrt{\frac{n}{6}}};
\]
the important feature of this bound is that it is exponentially smaller than the initial terms in the asymptotic expansion.

Thus we find that
\begin{align*}
u(m,n)
&=
	\frac{\pi}{8\cdot 2^\frac14\cdot 3^{\frac34}\cdot n^{\frac34}}I_{-\frac32}(n)
	-
	\frac{\pi^2\left(\frac{m^2}{4}-\frac{11}{48}\right)}{12\cdot 2^{\frac34}\cdot 3^{\frac14}\cdot n^{\frac54}}I_{-\frac52}(n)
	+
	O\left(n^{-\frac74}e^{\pi\sqrt{\frac{2n}{3}}}\right).
\end{align*}
To finish the proof, we use the asymptotic expansion of the Bessel function \cite[equation (4.12.7)]{AAR}
\begin{equation*}
I_k(x) \sim \frac{e^x}{\sqrt{2\pi x}}\left(1-\frac{4k^2-1}{8x}+O\left(\frac{1}{x^2}\right)\right) \qquad \textnormal{ as } x\to \infty.
\end{equation*}
Plugging in, we find that
\begin{equation*}
u(m,n) = \frac{e^{\pi \sqrt{\frac{2n}{3}}}}{16 \sqrt{3} n} \left(1 - \frac{1}{\sqrt{n}}\left(\frac{\pi m^2}{2 \sqrt{6}} + \nu \right) + O\left(\frac{1}{n}\right)\right),
\end{equation*}
where $\nu$ is an explicit constant that does not depend on $m$ or $n$ (the value of $\nu$ is not needed to conclude the formulas in Theorem \ref{thm:unimodalAsymp}, but for the sake of the interested reader we note that $\nu = \frac{\sqrt{3}}{\sqrt{2} \pi} - \frac{11 \pi}{24 \sqrt{6}}$).

\section{Proof of Theorem \ref{thm:distributionAsymp}, Corollary \ref{C:normal}, and Corollary \ref{C:posmoments}}\label{sec:Distribution}
We use Ingham's Tauberian theorem to obtain the asymptotic main term of the rank moments.
\begin{theorem}\label{TauberianT}
Let $f(q)=\sum_{n\geq 0}a(n) q^n$ be a power series with weakly increasing non-negative
coefficients and radius of convergence equal to one. If there exist constants $A>0$ and
$\lambda, \alpha\in\R$ such that as $t \to 0^+$ we have
\[
f \left(e^{-t} \right)  \sim
\lambda   t^\alpha
e^{\frac{A}{t}},
\]
then, as $n\to\infty$,
\[
a(n) \sim\frac{\lambda}{2\sqrt{\pi}}\,
\frac{A^{\frac{\alpha}{2}+\frac14}}{n^{\frac{\alpha}{2}+\frac34}}\,
e^{2\sqrt{An}}.
\]
\end{theorem}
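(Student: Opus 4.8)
The statement is Ingham's Tauberian theorem, so the plan is to recognize it as such and organize the proof around a saddle-point analysis whose rigor is supplied entirely by the monotonicity of the coefficients. First I would write the coefficient extraction as a circle integral,
\[
a(n)=\frac{1}{2\pi}\int_{-\pi}^{\pi} f\!\left(e^{-t+i\theta}\right)e^{n(t-i\theta)}\,d\theta,
\]
and fix the radius at the saddle $t=t_n:=\sqrt{A/n}$, the positive real critical point of $g(s):=A/s+ns$ (so that $g(t_n)=2\sqrt{An}$ and $g''(t_n)=2A^{-\frac12}n^{\frac32}$). Formally inserting the hypothesized asymptotic $f(e^{-s})\sim\lambda s^\alpha e^{A/s}$ with $s=t_n-i\theta$ and evaluating the resulting Gaussian integral produces
\[
a(n)\approx\frac{\lambda t_n^\alpha}{2\pi}e^{2\sqrt{An}}\sqrt{\frac{2\pi}{g''(t_n)}}=\frac{\lambda}{2\sqrt{\pi}}\,\frac{A^{\frac{\alpha}{2}+\frac14}}{n^{\frac{\alpha}{2}+\frac34}}e^{2\sqrt{An}},
\]
which already confirms that the claimed constant, the power of $A$, and the power of $n$ all arise correctly; this computation pins down the target and shows exactly where each factor must come from.

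The genuine difficulty is that the hypothesis only controls $f$ on the positive real axis as $t\to0^+$, whereas the saddle-point heuristic silently assumes the same asymptotic along the complex ray $s=t_n-i\theta$. Supplying this off-axis control is precisely the Tauberian content, and the only tool available is the monotonicity of $\{a(n)\}$. The plan is therefore to replace the heuristic by a two-sided squeeze. For the upper bound I would exploit that, for a window of length $H$ and any $t>0$, monotonicity gives $a(n)\sum_{m=n}^{n+H}e^{-mt}\le\sum_{m=n}^{n+H}a(m)e^{-mt}\le f(e^{-t})$, and for the lower bound the analogous inequality runs over $m\in[n-H,n]$, with the resulting tail $\sum_{m>n+H}a(m)e^{-mt}$ controlled by Abel summation against the upper bound just obtained. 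The subtlety is that a naive choice (sending $H\to\infty$) only reproduces the exponential $e^{2\sqrt{An}}$ together with a power of $n$ that is off by $n^{\frac14}$, so one must instead calibrate the window to the Gaussian scale $H\asymp n^{\frac34}$ and let $t$ range over the whole family of small values, using the full first-order asymptotic at every $t$ near $t_n$ rather than at a single point.

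The cleanest way I would make this rigorous is by comparison with an explicit reference series. I would construct a non-decreasing sequence $\{b(n)\}$ whose coefficients are known and satisfy the target asymptotic (so that the saddle point is legitimate for $\{b(n)\}$, its generating function being understood in a full complex neighbourhood), and whose generating function matches $\lambda s^\alpha e^{A/s}$ on the real axis to leading order; dividing $f$ by this reference reduces the claim to a little-$o$ Tauberian statement that the windowing inequalities above, applied to the difference, can close. I expect the main obstacle to be exactly this desmoothing step: extracting the sharp Gaussian factor $\tfrac{1}{2\sqrt{\pi}}$ from purely real-axis, first-order data. Unlike the Wright circle method used elsewhere in the paper, here there is no analytic handle off the real axis, so every gain beyond the crude exponential must be wrung out of the monotonicity hypothesis through a careful balancing of the window length $H$ against the spacing $t_n$; this is where the argument is genuinely delicate and where Ingham's theorem earns its difficulty.
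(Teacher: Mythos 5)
The paper does not actually prove this statement: it is quoted as Ingham's Tauberian theorem and used as a black box (the only ingredient the paper supplies is the monotonicity hypothesis, verified separately in Proposition \ref{P:mono}). So there is no internal proof to compare yours against; what can be judged is whether your sketch would yield a proof of Ingham's theorem. Your saddle-point computation is correct and does confirm that $\frac{\lambda}{2\sqrt{\pi}}A^{\frac{\alpha}{2}+\frac14}n^{-\frac{\alpha}{2}-\frac34}e^{2\sqrt{An}}$ is the right target, and you correctly diagnose that the entire difficulty is the absence of off-axis control on $f$.

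However, the Tauberian step as you describe it does not close. The windowing inequality $a(n)\sum_{m=n}^{n+H}e^{-mt}\le f\left(e^{-t}\right)$ is, for every choice of $H$ and $t$, off by a factor of order $n^{\frac14}$: the mass of $f\left(e^{-t_n}\right)$ is spread over a window of width $\asymp n^{\frac34}$ in $m$, while the left-hand side only captures an effective window of width $\asymp t_n^{-1}=\sqrt{n/A}$, and no calibration of $H$ against finitely many real values of $t$ repairs this; recovering the lost $n^{\frac14}$ and the Gaussian constant requires using the hypothesis at \emph{all} small $t$ simultaneously in an essential way. The standard argument (Ingham's) is structurally different from your squeeze: one first proves the asymptotic for the summatory function $\mathcal{A}(x)=\sum_{n\le x}a(n)$ by recasting the hypothesis as a convolution statement and invoking the Wiener Tauberian theorem --- this is where the constant $\frac{1}{2\sqrt{\pi}}$ is actually extracted from purely real-axis data --- and only afterwards de-smooths to $a(n)$ via monotonicity, using a window $H=\epsilon_n\sqrt{n}$ with $\epsilon_n\to0$ slowly (note that the correct de-smoothing scale is $o(\sqrt{n})$, the scale of variation of $e^{2\sqrt{An}}$, not the saddle width $n^{\frac34}$ you propose; these are different scales serving different steps). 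Your closing idea of dividing by a reference series with known coefficients is essentially circular: deducing $a(n)\sim b(n)$ from $\sum_n a(n)e^{-nt}\sim\sum_n b(n)e^{-nt}$ together with monotonicity \emph{is} the theorem. In short, the heuristic and the diagnosis of where the difficulty lies are both right, but the proof is not there; the missing ingredient is the Wiener-theorem (or an equivalent) argument for the summatory function, and a correct citation or a full proof along those lines is what this step of the paper actually relies on.
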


In order to apply Theorem \ref{TauberianT}, we  need to know that the moments of the unimodal rank are monotonic.

\begin{proposition}
\label{P:mono}
If $k,n \in \N_0$, then
\begin{equation*}
u_{2k}(n+1) \geq u_{2k}(n).
\end{equation*}
\end{proposition}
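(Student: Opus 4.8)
The plan is to prove the statement combinatorially by exhibiting, for each $k$, an injection from the multiset of strongly unimodal sequences of size $n$ (each weighted by $m^{2k}$, where $m$ is its rank) into the corresponding weighted multiset for size $n+1$ that never decreases the weight $m^{2k}$. The cleanest way to do this is to find, for every fixed rank value $m$, an injection $\phi_m : \cU(n) \cap \{\rk = m\} \hookrightarrow \cU(n+1) \cap \{\rk = m\}$; summing the resulting inequalities $u(m,n) \le u(m,n+1)$ against $m^{2k}$ over all $m \in \Z$ then yields $u_{2k}(n) \le u_{2k}(n+1)$ immediately, since $m^{2k} \ge 0$. So the real content is the rank-preserving size-increasing injection, after which the moment statement is a one-line summation.

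First I would construct $\phi_m$ explicitly. Given $\sigma = \{a_1 < \dots < a_{k-1} < a_k > a_{k+1} > \dots > a_s\}$ with peak $a_k$ and rank $m = s - 2k + 1$, the natural move is to add $1$ to the peak term, producing $\sigma' = \{a_1, \dots, a_{k-1}, a_k + 1, a_{k+1}, \dots, a_s\}$. Since $a_k$ was already strictly larger than both its neighbors, $a_k + 1$ remains a strict peak, so $\sigma'$ is again strongly unimodal; its size is $n+1$, it still has $s$ terms with the peak in position $k$, and hence its rank is again $s - 2k + 1 = m$. This map is clearly injective, since $\sigma$ is recovered by subtracting $1$ from the peak of $\sigma'$. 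Thus $\phi_m$ is a rank-preserving injection from size $n$ to size $n+1$, giving $u(m,n) \le u(m,n+1)$ for every $m$.

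Finally I would assemble the moment inequality. Multiplying $u(m,n) \le u(m,n+1)$ by the nonnegative weight $m^{2k}$ and summing over $m \in \Z$ gives
\[
u_{2k}(n) = \sum_{m \in \Z} m^{2k}\, u(m,n) \le \sum_{m \in \Z} m^{2k}\, u(m,n+1) = u_{2k}(n+1),
\]
which is exactly the claim. I do not anticipate a serious obstacle here: the only point requiring a moment's care is to verify that incrementing the peak cannot collide the peak with a neighboring term or otherwise violate strict unimodality, but since the peak inequalities $a_{k-1} < a_k$ and $a_k > a_{k+1}$ are strict, increasing $a_k$ by one only strengthens them, so the construction goes through for every $\sigma$ without exception. (If one instead preferred a purely analytic route, one could note that $u_{2k}(n)$ are the coefficients of $\partial_w^{2k}$ applied to $U(w;q)$ evaluated at $w=1$, but the combinatorial injection is both shorter and self-contained, and it is the weak monotonicity — rather than strict — that Theorem \ref{TauberianT} requires.)
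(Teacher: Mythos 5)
Your proof is correct and is essentially identical to the paper's: both use the injection that adds $1$ to the peak of a strongly unimodal sequence, observe that it preserves the rank, and conclude monotonicity of the moments from the nonnegativity of the weights $m^{2k}$. The only cosmetic difference is that you first record the per-rank inequality $u(m,n)\le u(m,n+1)$ and then sum, whereas the paper sums over sequences directly.
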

\begin{proof}
Recall that the rank moments are defined as
\begin{equation*}
u_{2k}(n) = \sum_{|\sigma| = n} \rank(\sigma)^{2k}
= \sum_{|\sigma| = n} \left|\rank(\sigma)\right|^{2k}.
\end{equation*}
There is a natural injection which we denote by $\phi$ that sends unimodal sequences of size $n$ to unimodal sequences of size $n+1$ and preserves the rank.  In particular, suppose that $\sigma$ is the unimodal sequence $\{a_1, \dots, a_m, \dots, a_s\}$ with $|\sigma| = n$ and peak $a_m$. Then set
\begin{equation*}
\phi(\sigma) := \{a_1, \dots, a_m + 1, \dots a_s\}.
\end{equation*}
It is clear that $\rank(\phi(\sigma)) = \rank(\sigma)$, and that $\phi$ is an injection (whose image contains all strongly unimodal sequences of $n+1$ whose peak is at least two larger than any part). The moments therefore satisfy
\begin{align*}
u_{2k}(n) = \sum_{|\sigma| = n} \left|\rank(\sigma)\right|^{2k}
= \sum_{|\sigma| = n} \left|\rank(\phi(\sigma))\right|^{2k}
\leq \sum_{|\sigma| = n+1} \left|\rank(\sigma)\right|^{2k} = u_{2 k}(n+1).
\end{align*}
The inequality holds because every term in the sum is non-negative.
\end{proof}

\begin{remark}
Proposition \ref{P:mono} can be modified so that it applies to the case of moments
for the partition rank and crank statistics.  If the rank of a partition is
positive, then the injection is defined by increasing the largest part by one,
and otherwise, a part of size one is added; the definition for the crank is
identical.  In all cases the magnitude of the rank or crank statistics do not
decrease (in fact, the statistic is only preserved by the injection in the case
that the crank is positive and the partition contains ones; in all other cases
the statistic changes by at least one). This would allow one to similarly use Ingham's Tauberian
theorem in order to prove the main asymptotic terms in \cite{BM12TAMS}.
However, this is not enough to prove the asymptotic inequality for rank and
crank moments that is the main result of that paper, as it requires a more
detailed asymptotic expansion.
\end{remark}
Theorem \ref{thm:distributionAsymp} follows from the asymptotic behavior of the moment generating function. For this set
$$
\mathbb{U}_{2 k}(q):= \sum_{n\geq 0} u_{2k}(n)q^n.
$$

\begin{theorem}\label{thm:U_2ell}
As $t \rightarrow 0^+$, we have
$$
\mathbb U_{2 k}\left(e^{-t}\right) \sim \frac{(2k-1)!!}{4}t^{-k}e^{\frac{\pi^2}{6t}}.
$$
\end{theorem}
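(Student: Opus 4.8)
The plan is to reduce the theorem to a one-dimensional Gaussian sum in the rank variable $m$. Since each $U_m(q)$ is a power series in $q$ with $0<|q|<1$, all rearrangements below are absolutely convergent, and unwinding the definition of $u_{2k}(n)$ gives
\[
(q)_\infty\,\mathbb U_{2k}(q)=\sum_{m\in\Z} m^{2k}\,(q)_\infty U_{|m|}(q)=\sum_{m\in\Z} m^{2k}\,V_{|m|}(q),
\]
where $V_m=(q)_\infty U_m$ is exactly the function studied in Proposition~\ref{P:Vm}. Writing $q=e^{-t}$, the whole problem is therefore to understand $V_m(e^{-t})$ as $t\to 0^+$. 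The Gaussian-type decay $q^{Q_0(m,0)}$ built into the lattice representation \eqref{VmPartial} concentrates the sum on $m$ of size $t^{-1/2}$ (here $Q_0(N,j):=\tfrac12(N+\tfrac12)^2+\tfrac32(j+\tfrac12)^2+2(N+\tfrac12)(j+\tfrac12)$ is the quadratic form of \eqref{VmPartial}, so that $Q_0(m,0)=\tfrac{m^2}2+\tfrac{3m}2+1$), and on that range the fixed-$m$ expansion of Proposition~\ref{P:Vm} is not, by itself, usable.

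The key step is a version of Proposition~\ref{P:Vm} that is uniform in $m$. I would rerun the Euler--Maclaurin computation on \eqref{VmPartial}, now keeping $m$ as a parameter. Exactly as in the proof of Proposition~\ref{P:Vm}, the signs $(-1)^{\varepsilon_1+\varepsilon_2}$ kill every term of \eqref{E:EulerMac} that integrates over at least one coordinate, leaving only the corner contribution; tracking its $m$-dependence yields
\[
V_m\!\left(e^{-t}\right)=\tfrac14\,e^{-t\,Q_0(m,0)}\bigl(1+o(1)\bigr)=\tfrac14\,e^{-\frac{t m^2}{2}}\bigl(1+o(1)\bigr)
\]
uniformly for $0\le m\le t^{-\frac12+\varepsilon}$, the second equality because $e^{-t(\frac{3m}2+1)}\to 1$ uniformly there. (For bounded $m$ this is just the Taylor expansion recorded in Proposition~\ref{P:Vm}.) For the tail $m>t^{-\frac12+\varepsilon}$ a crude bound on the absolute sum is far too lossy because of the cancellation in $(-1)^{n_1+n_2}$; instead I would exploit that cancellation directly --- using that $\sum_{n_1,n_2\ge 0}(-1)^{n_1+n_2}g(n_1,n_2)$ lies between $0$ and $g(0,0)$ for a positive $g$ that is decreasing in each variable with the relevant convexity (which holds for $g=q^{Q_0(n_1+m,n_2)}$) --- to obtain the clean uniform bound $\lvert V_m(e^{-t})\rvert\le e^{-t\,Q_0(m,0)}\ll e^{-t m^2/2}$, making the tail super-polynomially smaller than the main term.

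Granting these estimates, the weighted lattice sum collapses to a Gaussian moment sum:
\[
(q)_\infty\,\mathbb U_{2k}\!\left(e^{-t}\right)\sim\frac14\sum_{m\in\Z} m^{2k}e^{-\frac{t m^2}{2}}\sim\frac14\int_{-\infty}^{\infty} x^{2k}e^{-\frac{t x^2}{2}}\,dx=\frac{(2k-1)!!}{4}\,\sqrt{2\pi}\;t^{-k-\frac12},
\]
where the sum-to-integral passage is Euler--Maclaurin for the slowly varying summand and the last equality is the standard even moment $\int_{\R}x^{2k}e^{-ax^2}dx=(2k-1)!!\,2^{-k}\sqrt{\pi}\,a^{-k-1/2}$ with $a=t/2$. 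Finally I divide by $(q)_\infty$, using \eqref{E:1/qinfty} in the form $\frac{1}{(q)_\infty}\sim\sqrt{\tfrac{t}{2\pi}}\,e^{\frac{\pi^2}{6t}}$; since $\sqrt{t/(2\pi)}\cdot\sqrt{2\pi}\,t^{-1/2}=1$, all powers of $t^{\pm 1/2}$ cancel and
\[
\mathbb U_{2k}\!\left(e^{-t}\right)\sim\frac{(2k-1)!!}{4}\,t^{-k}e^{\frac{\pi^2}{6t}},
\]
as claimed.

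The main obstacle is precisely the uniform control of $V_m(e^{-t})$. Proposition~\ref{P:Vm} is an expansion for bounded $m$, whereas the dominant contribution comes from $m$ of order $t^{-1/2}$, where $t m^2$ is of order $1$ and every term of that fixed-$m$ expansion is of comparable size; making the Euler--Maclaurin analysis uniform in $m$ (both the leading Gaussian and a summable error) and supplying a cancellation-respecting tail bound is the technical heart of the argument.
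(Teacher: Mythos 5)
Your strategy is viable and lands on the correct constant, but it is organized differently from the paper. The paper sidesteps the question of uniformity in $m$ entirely by treating $m$ as a \emph{third} summation variable: it writes $\sum_{m\ge 1}m^{2k}V_m(e^{-t})$ as a single lattice sum over $\b n\in\N_0^3$ (with $m=n_3+1$ and the weight $m^{2k}$ absorbed into the test function $F(\b x)=x_3^{2k}e^{-2(x_1+x_3/2)^2-6x_2^2-8(x_1+x_3/2)x_2}$), applies the three-dimensional Euler--Maclaurin formula \eqref{E:EulerMac} once, and reads off the main term from the $\mathscr S=\{1,2\}$, $n_1=n_2=0$ contribution, whose integral $\int_0^\infty x_3^{2k}e^{-x_3^2/2}\,dx_3$ produces the $(2k-1)!!$. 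Your version does the $(n_1,n_2)$-analysis uniformly in $m$ and then sums the resulting Gaussian $\tfrac14e^{-tm^2/2}$ over $m$; this is more transparent probabilistically (it exhibits the rank spread as a Gaussian of width $t^{-1/2}$), but it concentrates all the difficulty in the uniform estimate for $V_m(e^{-t})$, which you rightly identify as the heart of the matter.

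Two points there need repair. First, ``rerunning the Euler--Maclaurin computation keeping $m$ as a parameter'' does not by itself yield your uniform main term: in the proof of Proposition \ref{P:Vm} the $m$-dependence sits in the shift $a_1=\tfrac m2+\tfrac14+\tfrac{\varepsilon_1}2$, and since $B_{n+1}(a)$ grows like $a^{n+1}$, the terms $B_{n_1+1}(\tfrac m2+\tfrac14)\,t^{n_1/2}$ are all of comparable (indeed unbounded) size when $m\asymp t^{-1/2}$, so that expansion is not asymptotic in the relevant range. You must either move the $m$-shift out of $a_1$ and into the test function (replacing $f(\b x)$ by $f(x_1+\tfrac{m\sqrt t}{2},x_2)$, whose derivatives are uniformly controlled because $m\sqrt t=O(1)$ in the bulk), or prove $V_m(e^{-t})=\tfrac14e^{-tQ_0(m,0)}(1+o(1))$ by the elementary alternating-pairing argument you reserve for the tail, which does work in the bulk since consecutive term ratios are $e^{-O(tm)}=1+o(1)$ there. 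Second, your cutoff has the wrong sign of $\varepsilon$: as $t\to0^+$ the range $m\le t^{-1/2+\varepsilon}$ \emph{excludes} the Gaussian peak $m\asymp t^{-1/2}$, and on the complement $m>t^{-1/2+\varepsilon}$ one only has $tm^2\ge t^{2\varepsilon}\to0$, so $e^{-tm^2/2}$ is not small. The bulk should be $m\le t^{-1/2-\varepsilon}$ and the tail $m>t^{-1/2-\varepsilon}$, where $tm^2\ge t^{-2\varepsilon}\to\infty$; there a crude absolute-value bound $|V_m(e^{-t})|\ll t^{-O(1)}e^{-tQ_0(m,0)}$ already suffices, sparing you the two-dimensional alternating-sum inequality, which as stated needs an unverified supermodularity hypothesis. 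With these corrections your argument goes through and reproduces the paper's asymptotic.
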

\begin{proof}
For the proof we use the three-dimensional Euler-Maclaurin summation formula (see \eqref{E:EulerMac}). We start by writing
\begin{align*}
\mathbb U_{2k} (q) =  \frac{1}{(q)_\infty} \left(\delta_{k,0} V_0(q) + 2\sum_{m\geq 1} m^{2k} V_m(q)\right),
\end{align*}
where $\delta_{k,0}$ equals zero unless $k=0$, in which case we have one.
By Proposition \ref{P:Vm} we have
\begin{equation*}
V_0\left(e^{-t}\right) \sim \frac14.
\end{equation*}
Next we write
\begin{align*}
&\sum_{m\geq 1}m^{2k}V_m\left(e^{-t}\right)=t^{-k}\sum_{\delta_1,\delta_2\in\{0,1\}} (-1)^{\delta_1+\delta_2} \sum_{\b n\in\mathbb N_0^3} F\left(\sqrt{t}\left(n_1+\frac14+\frac{\delta_1}{2},n_2+\frac14+\frac{\delta_2}{2},n_3+1\right)\right),
\end{align*}
where $F(\b x):= x_3^{2k} e^{-2(x_1+\frac{x_3}{2})^2-6x_2^2-8(x_1+\frac{x_3}{2})x_2}$.
We now apply \eqref{E:EulerMac} in dimension three. Because of the weighting factor $(-1)^{\delta_1 + \delta_2}$, any term in \eqref{E:EulerMac} that does not depend on both $\delta_1$ and $\delta_2$ vanishes, leaving just two sums to consider.

The term corresponding to $\mathscr{S}=\{1,2\}$ is
\begin{equation}\label{remainfirst}
t^{-\ell-\frac12} \sum_{\delta_1,\delta_2\in\{0,1\}}(-1)^{\delta_1+\delta_2} \sum_{n_1,n_2\geq 0} \frac{B_{n_1+1}\left(\frac14+\frac{\delta_1}{2}\right)}{(n_1+1)!} \frac{B_{n_2+1}\left(\frac14+\frac{\delta_2}{2}\right)}{(n_2+1)!} t^{\frac{n_1+n_2}{2}} \int_0^\infty F^{(n_1,n_2,0)}(0,0,x_3)dx_3.
\end{equation}
The sum on $\delta_1,\delta_2$ evaluates as
\begin{multline*}
\sum_{\delta_1,\delta_2\in\{0,1\}} (-1)^{\delta_1+\delta_2} B_{n_1+1}\left(\frac14+\frac{\delta_1}{2}\right)B_{n_2+1}\left(\frac14+\frac{\delta_2}{2}\right) \\
=\begin{cases}
4 B_{n_1+1}\left(\frac14\right) B_{n_2+1}\left(\frac14\right)\quad &\textnormal{ if } n_1\equiv n_2\equiv 0 \pmod{2},\\
0\quad&\textnormal{ otherwise.}
\end{cases}
\end{multline*}
The dominant term from \eqref{remainfirst} comes from $n_1=n_2=0$ and contributes
\begin{equation*}
\frac{4}{\sqrt{t}} t^{-k} B_1\left(\frac14\right)^2 \int_0^\infty F(0,0,x_3)dx_3
=\frac{(2k-1)!!\sqrt{\pi}}{4\sqrt{2}} t^{-k-\frac12}.
\end{equation*}
The first term in \eqref{E:EulerMac} is of higher order.
Thus we get
\begin{equation*}
\mathbb U_{2k} \left(e^{-t}\right)
\sim
\frac{1}{\left(e^{-t}\right)_\infty} \left(\delta_{k,0}\frac14+2^{k-\frac32}\Gamma\left(\ell+\frac12\right)t^{-k-\frac12}\right)
\sim
\frac{2^{k - \frac32} \Gamma\left(k+\frac12\right)}{\left(e^{-t}\right)_\infty t^{k+\frac12}}.
\end{equation*}
Now
	\begin{equation*}
	\frac{1}{\left(e^{-t}\right)_\infty} \sim \sqrt{\frac{t}{2\pi}} e^{\frac{\pi^2}{6t}}.
	\end{equation*}
Combining gives the claim.
\end{proof}

Theorem \ref{thm:distributionAsymp} now follows from Theorem \ref{TauberianT}, Proposition \ref{P:mono}, and  Theorem \ref{thm:U_2ell}. The subsequent corollaries are then a straightforward consequence of the ``Method of Moments'', which uses the limiting behavior of the moments of a sequence of random variables to determine the limiting distribution. In the following key result $X$ (respectively $X_n$) is a random variable with distribution $\mu$ (resp. $\mu_n$), so that $\mu([a,b]) := \mathbf{P}(a \leq X \leq b)$.
\begin{theorem}[{\cite[Theorem 30.2]{billingsley}}]\label{thm:billingsley}
Suppose that the distribution of $X$ is determined by its moments, that moments of all orders exist for each $\{X_n\}_{n \geq 1}$, and that $\lim_{n \to \infty} E[X_n^r] = E[X^r]$ for $r \geq 1$. Then $X_n$ converges in distribution to $X$; i.e., if $f$ is bounded and continuous, then
\begin{equation*}
\lim_{n \to \infty} \int_a^b f(x) d\mu_n(x) = \int_a^b f(x) d\mu(x).
\end{equation*}
\end{theorem}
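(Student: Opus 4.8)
The plan is to prove this by the classical three-step scheme: establish tightness of $\{\mu_n\}$, extract weakly convergent subsequences, and then identify every subsequential limit using the moment-determinacy hypothesis. First I would establish tightness. Since $E[X_n^r] \to E[X^r]$ for every $r$, each moment sequence is convergent and hence bounded; in particular $\sup_n E[X_n^2] =: C < \infty$. Chebyshev's inequality then gives $\mu_n(|x| > K) \le C/K^2$ uniformly in $n$, so the tails can be made uniformly small by taking $K$ large, which is precisely tightness of $\{\mu_n\}$.

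By Helly's selection theorem (equivalently Prokhorov's theorem), tightness implies that every subsequence of $\{\mu_n\}$ has a further subsequence converging weakly to some probability measure $\nu$; write $\mu_{n_k} \Rightarrow \nu$ for such a subsequence. The heart of the argument is to show that any such limit $\nu$ coincides with the law $\mu$ of $X$, and for this I must verify that $\nu$ has the same moments as $\mu$. Fixing $r \ge 1$, the goal is to pass from the weak convergence $\mu_{n_k} \Rightarrow \nu$ to the convergence $\int x^r\, d\mu_{n_k} \to \int x^r\, d\nu$.

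This passage is the main obstacle, since $x \mapsto x^r$ is unbounded and weak convergence alone does not control it. I would resolve it with a uniform-integrability argument: because $\sup_k E[|X_{n_k}|^{r+1}] < \infty$ (the $(r+1)$-st moments converge to the finite quantity $E[|X|^{r+1}]$), the bound $|X_{n_k}|^r \mathbf 1_{\{|X_{n_k}| > L\}} \le L^{-1} |X_{n_k}|^{r+1}$ shows that the family $\{|X_{n_k}|^r\}_k$ is uniformly integrable. The standard fact that weak convergence together with uniform integrability implies convergence of the expectations of the continuous function $x^r$ then yields $\int x^r\, d\mu_{n_k} \to \int x^r\, d\nu$. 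Combined with the hypothesis $\int x^r\, d\mu_{n_k} = E[X_{n_k}^r] \to E[X^r] = \int x^r\, d\mu$, this gives $\int x^r\, d\nu = \int x^r\, d\mu$ for every $r \ge 1$, so $\nu$ and $\mu$ share all of their moments.

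Finally, since the distribution $\mu$ of $X$ is by hypothesis determined by its moments, this forces $\nu = \mu$. Thus every weakly convergent subsequence of $\{\mu_n\}$ has limit $\mu$; a routine subsequence-of-subsequence argument, using tightness to guarantee that convergent subsequences always exist, then upgrades this to full weak convergence $\mu_n \Rightarrow \mu$. This is exactly the assertion that $X_n$ converges in distribution to $X$, equivalently that $\int f\, d\mu_n \to \int f\, d\mu$ for every bounded continuous $f$.
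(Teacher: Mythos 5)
Your argument is correct and is essentially the standard proof of the Method of Moments: the paper itself gives no proof, citing Billingsley's Theorem 30.2, and the proof there follows exactly your scheme (tightness from the bounded second moments, Helly/Prokhorov subsequence extraction, identification of subsequential limits via uniform integrability and moment determinacy, then the subsequence criterion). The only point worth tightening is that for odd $r+1$ the hypothesis controls $E[X_n^{r+1}]$ rather than $E[|X_n|^{r+1}]$; this is harmless, since one can instead dominate $|X_{n_k}|^r \mathbf{1}_{\{|X_{n_k}|>L\}}$ by $L^{-(r+2)}X_{n_k}^{2r+2}$ using the bounded even moments.
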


\begin{proof}[Proof of Corollary \ref{C:normal}]

For each $n$, let $\mathbf{P}_n$ denote the uniform probability distribution on $\cU(n)$, so that each unimodal sequence of size $n$ is chosen with probability $\frac{1}{u(n)}$. Now define a random variable $X_n$ on $\cU(n)$ by taking the normalized rank; in particular, if the outcome of the random selection is $\sigma \in \cU(n)$, then
\begin{equation*}
X_n = X_n(\sigma) := \frac{\rk(\sigma)}{\left(\frac{6n}{\pi^2}\right)^{\frac14}}.
\end{equation*}
Denote the corresponding distribution $X_n$ by $\mu_n$, and distribution function by $F_n$, so
\begin{equation*}
F_n(x) = \mu_n((-\infty, x]) := \mathbf{P}_n \left( \left\{ \sigma \in \cU(n) \; : \; X_n(\sigma) \leq x \right\} \right)
 = \frac{1}{u(n)} \sum_{m \leq \left(\frac{6n}{\pi^2}\right)^{\frac14} x} u(m,n).
\end{equation*}

Theorem \ref{thm:distributionAsymp} implies that
\begin{equation*}
\lim_{n \to \infty} E\left[X^{2 k}_n\right]
= \lim_{n \to \infty} \frac{1}{u(n)} \sum_{\sigma \in \cU(n)} X^{2k}_n
= \lim_{n \to \infty} \frac{u_{2k}(n)}{u(n) \left( \frac{6n}{\pi^2}\right)^{\frac{k}{2}} }= (2k - 1)!!,
\end{equation*}
and we also know by symmetry that $E[X^{2 k + 1}_n] = 0$ for $k,n\in\N_0$. As mentioned in the introduction, these limiting values are the moments for the standard normal random variable $Z$, which has the well-known distribution $\Phi$. We now apply Theorem \ref{thm:billingsley} to conclude that $X_n$ converges in distribution to $Z$. In particular, setting $f(x) = 1$ gives that
\begin{equation*}
\lim_{n \to \infty} \frac{1}{u(n)} \sum_{m \leq \left(\frac{6n}{\pi^2}\right)^{\frac14} x} u(m,n)
= \lim_{n \to \infty} \int_{-\infty}^x d \mu_n(x)
= \Phi(x),
\end{equation*}
which is precisely the statement of the corollary.
\end{proof}

\begin{proof}[Proof of Corollary \ref{C:posmoments}]
It is known \cite[Problem 21.2]{billingsley} that the absolute moments of the standard normal distribution $Z$  are given by
\begin{equation*}
E[|Z|^r] = \frac{2^{\frac{r}{2}}}{\sqrt{\pi}} \Gamma\left(\frac{r+1}{2}\right).
\end{equation*}
On the other hand, we have
\begin{equation*}
E\left[|X_n|^r\right] = \frac{1}{\left(\frac{6n}{\pi^2}\right)^\frac{r}{4}} \sum_{m \in \Z} |m|^r \frac{u(m,n)}{u(n)}
= \frac{u_r^+(n)}{u(n)\left(\frac{6n}{\pi^2}\right)^\frac{r}{4}}.
\end{equation*}
Theorem \ref{thm:billingsley} now implies that
\begin{equation*}
\lim_{n \to \infty} E[ |X_n|^r ] = E[|Z|^r],
\end{equation*}
which is the claimed result.
\end{proof}

\section{(Generalized) quantum modular properties}\label{sec:Modularity}

Due to equation \eqref{E:UAppell}, the function $U(w;q)$ can be recognized as
essentially a mock Jacobi form \cite{BR}. Furthermore,
Bryson, Ono, Pitman, and the fourth author \cite{BOPR} found
$U(-1;q)$ to be a so-called
quantum modular form.
Roughly speaking Zagier \cite{Za} defined \textit{quantum modular forms} to be functions
$f:\mathcal Q\to \C$ ($\mathcal Q\subset \Q$) such that for
$M=\left(\begin{smallmatrix}a&b\\c&d\end{smallmatrix}\right)\in\Gamma$
($\Gamma\le \SL_2(\Z)$) and $\chi$ a certain multiplier,
the \textit{error to modularity} given by
\begin{equation*}
f(\tau) - \chi(M)^{-1} (c\tau+d)^{-k} f(M\tau)
\end{equation*}
can be extended to an open subset of $\R$ as a real-analytic function. The result of \cite{BOPR} follows by establishing that $U(-1;q)$
is dual to Kontsevich's ``strange'' function, $F(q):=\sum_{n\ge0}(q)_n$,
in that $U(-1;q)=F(q^{-1})$ when $q$ is a root of unity. Note that $F(q)$ converges on no open subset of $\mathbb{C}$, and does not give a well defined series in $q$.

This has been generalized in several different ways. In a paper with Folsom, two of the authors \cite{BFR15} showed that certain weighted, twisted moments of the strong unimodal rank are quantum modular forms.
Furthermore, using $U(w,q)$ as a prototypical example, Folsom and the first author \cite{BF} introduced the notion of {\it quantum Jacobi forms}, which are functions defined on subsets of $\Q \times \Q$ such that the ``errors'' to both modular and elliptic transformations are well-behaved (as real-analytic functions).
Folsom, Ki, Vu, and Yang \cite{FKVY} found that $U(w;q)$ demonstrates
quantum modular behavior for general $w$  and is dual to a suitable
two variable analog of $F(q)$. In Hikami and Lovejoy \cite{HL}
considered multi-sum versions of $U(q)$ and $F(q)$, and
established both duality and quantum modularity.  With these results for $U(w;q)$ in mind,
it is then reasonable to ask if the functions $U_m(q)$ have any
modularity properties. While it is likely too much to ask for them to be mock
modular forms, some sort of quantum modular properties are not an
unreasonable expectation.

We close this paper by describing a potential analytic framework for understanding the modularity of the unimodal rank generating functions $U_m(q)$. First, we recall another important example of a real analytic modular form associated to a combinatorial $q$-series.
Andrews, Dyson, and Hickerson \cite{ADH} defined
\begin{equation*}
\sigma(q):=\sum_{n\geq 0} \frac{q^{\frac{n(n+1)}{2}}}{(-q)_n},\qquad \sigma^*(q) := 2\sum_{n\geq 1} \frac{(-1)^nq^{n^2}}{\left(q;q^2\right)_n}.
\end{equation*}
Note \cite{ADH, Zw2}
that $\sigma$ and $\sigma^*$ can be written as indefinite theta functions
\begin{align*}
q^{\frac{1}{24}} \sigma(q) &= \left(\sum_{\substack{n+j\geq 0 \\ n-j \geq 0}} + \sum_{\substack{n+j< 0 \\ n-j < 0}}\right) (-1)^{n+j} q^{\frac32\left(n+\frac16\right)^2-j^2},\\
q^{-\frac{1}{24}} \sigma^*(q) &= \left(\sum_{\substack{2j+3n\geq 0 \\ 2j-3n>0}} + \sum_{\substack{2j+3n < 0 \\ 2j-3n\leq 0}}\right)(-1)^{n+j} q^{-\frac32\left(n+\frac16\right)^2 +j^2}.
\end{align*}
Cohen \cite{Co} then viewed these functions in the framework of Maass forms. To recall his results, define the coefficients $T(n)$ by
\begin{equation*}
\sum_{\substack{n\in\Z \\ n\equiv 1\pmod{24}}} T(n) q^{\frac{|n|}{24}} := q^{\frac{1}{24}}\sigma(q)+ q^{-\frac{1}{24}}\sigma^*(q)
\end{equation*}
and set, $\tau = u+iv$,
\begin{equation*}
\varphi_0(\tau) := v^{\frac12} \sum_{n\in \Z\setminus\{0\}} T(n) K_0\left(\frac{2\pi|n| v}{24}\right) e^{\frac{2\pi inu}{24}},
\end{equation*}
where $K_0$ is the $K$-Bessel function of weight zero of the second kind.
Cohen then proved that $\varphi_0$ is a Maass form of weight zero on $\Gamma_0(2)$ (with some multiplier) and eigenvalue $\frac14$. Maass forms transform like modular forms. However, instead of being meromorphic they are eigenfunctions under the Laplace operator
\[
\Delta:=-v^2\left(\frac{\partial^2}{\partial u^2}+\frac{\partial^2}{\partial v^2}\right).
\]
The connection to Maass forms directly gives that $\sigma$ is a quantum modular form.

Zwegers \cite{Zw2} considered general indefinite theta series of the shape
satisfied by $\sigma$ and $\sigma^*$ and associated functions similar to that of
$\phi_0$. For convenience, suppose that $\Phi^+$ and $\Phi^-$ are the indefinite
theta series and the associated function is $\Phi$. By construction, $\Phi$ is harmonic, but $\Phi$ may not have any modular properties.
However, Zwegers was able to ``complete'' $\Phi$ to a function $\widehat\Phi$ that
satisfies a modular transformation, but may no longer be harmonic. In the case
that $\Phi=\widehat\Phi$, the functions
$\Phi^+$ and $\Phi^-$ are quantum modular forms
due to $\Phi$ being a Maass form (see \cite[Theorem 2.8]{BrLoRo} for a precise
statement).

From equation \eqref{VmPartial}, we have an indefinite theta representation of
$V_m(q)$ and so we can apply Zwegers' machinery. However, in doing so we find that
we are in the case that the associated harmonic object is not equal to its
modular completion. For this reason, we suspect that our functions are not
quantum modular forms in the sense above.

We pose two problems. First, determine any generalized quantum
modular properties of $U_m(q)$. Second, more generally, in the
case of Zwegers' construction when $\Phi\not=\widehat\Phi$, determine any
generalized quantum modular properties of $\Phi^+$ and $\Phi^-$.


\begin{thebibliography}{99}

\bibitem{And98} G. Andrews,   \emph{The theory of partitions},
Cambridge University Press, Cambridge, 1998.

\bibitem{An11}G. Andrews.
\newblock \textit{Concave and convex compositions.}
\newblock Ramanujan J. {\bf 31} (2013), 67--82.

\bibitem{AAR} G. Andrews, R. Askey, and R. Roy, \emph{Special functions}, Encyclopedia of Mathematics and its Applications
{\bf 71} (1999), Cambridge University Press, Cambridge.

\bibitem{AB} G. Andrews and B. Berndt,
\newblock \textit{Ramanujan's lost notebook. {P}art {II}}.
\newblock Springer, New York, 2009.

\bibitem{ACK} G. Andrews, S. Chan, and B. Kim, \emph{The odd moments of ranks and crank,} J. Comb. Theory A {\bf 120} (2013), 77--91.

\bibitem{ADH} G. Andrews, F. Dyson, and D. Hickerson, \emph{Partitions and indefinite quadratic forms}, Invent. Math. {\bf 91} (1988), 391--407.

\bibitem{AG} G. Andrews and F. Garvan, {\it Dyson's crank of a partition},
Bull. Amer. Math. Soc. {\bf 18} (1988), 167-171.

\bibitem{Apo90} T. Apostol, \emph{Modular Functions and Dirichlet Series in Number Theory
Series:} Grad. Texts Math. {\bf 41},
2nd ed., 1990.

\bibitem{ASD} A. Atkin and P. Swinnerton-Dyer, {\it
Some properties of partitions}, Proc. London Math. Soc. {\bf 4}
(1954), 84--106.

\bibitem{Aul51} F. Auluck, \emph{On some new types of partitions associated with generalized Ferrers graphs}, Proc. Cambridge Phil. Soc. {\bf 47} (1951), 679--686.

\bibitem{billingsley} P. Billingsley, \emph{Probability and measure. Third edition.} Wiley Series in Probability and Mathematical Statistics. John Wiley \& Sons, Inc., New York, 1995.

\bibitem{BF} K. Bringmann and A. Folsom, {\it Quantum Jacobi forms and finite evaluations of unimodal rank generating functions}, Archiv der Mathematic \textbf{107}, special volume for E. Gekeler (2016), 367--378.

\bibitem{BFR15} K. Bringmann, A. Folsom, and R. Rhoades, \emph{Unimodal sequences and ``strange'' functions: a family of quantum modular forms}, Pacific J. Math. {\bf 274} (2015), 1--25.

\bibitem{BrLoRo} K.~{Bringmann}, J.~{Lovejoy}, and L.~{Rolen},
\newblock {\it On some special families of $q$-hypergeometric Maass forms},
\newblock Int. Math. Res. Not. \textbf{18} (2018), 5537--5561.

\bibitem{BM} K. Bringmann and K. Mahlburg, \emph{An extension of the Hardy-Ramanujan Circle Method and  applications to partitions without sequences},
Amer. J. Math  \textbf{133}  (2011),  1151--1178.

\bibitem{BM12TAMS} K.~Bringmann and K.~Mahlburg,
\newblock {\em Asymptotic inequalities for positive crank and rank moments},
\newblock { Trans. Amer. Math. Soc.} \textbf{366} (2014), 1073--1094.

\bibitem{BM14JCTA}
K. Bringmann and K. Mahlburg, \emph{Asymptotic formulas for stacks and unimodal sequences}, J. Comb. Theory A {\bf 126} (2014), 194--215.

\bibitem{BMR11} K. Bringmann, K. Mahlburg, and R. Rhoades, \emph{Asymptotics for crank and rank moments}, Bull. of the London Math. Soc. {\bf 43} (2011), 661--672.

\bibitem{BO10} K. Bringmann and K. Ono, \emph{Dyson's ranks and Maass forms}, Ann. of Math. (2) {\bf 171} (2010), 419--449.

\bibitem{BR} K. Bringmann and O. Richter, {\em Zagier-type duality and lifting maps for harmonic Maass-Jacobi forms}, Adv. Math. {\bf 225} (2010), 2298--2315.

\bibitem{BOPR}
J.~Bryson, K.~Ono, S.~Pitman, and R. Rhoades,
\newblock {\em Unimodal sequences and quantum and mock modular forms},
\newblock { Proc. Natl. Acad. Sci. USA} \textbf{109} (2012), 16063--16067.

\bibitem{CM} S. Chan and R. Mao, \emph{Inequalities for ranks of partitions and the first moment of ranks and cranks of partitions}, Adv. Math. {\bf 258} (2014), 414--437.

\bibitem{Choi} Y. Choi, \emph{The basic bilateral hypergeometric series and the mock theta functions,} Ramanujan J. \textbf{24} (2011), 345--386.

\bibitem{Co} H. Cohen, \emph{$q$-identities for Maass wave forms}, Invent. Math. {\bf 91} (1988), 409--422.

\bibitem{DP15} S. DeSalvo and I. Pak, \emph{Log-concavity of the partition function}, Ramanujan J. {\bf 38} (2015), 61--73.

\bibitem{DJR13} P. Diaconis, S. Janson, and R. Rhoades, \emph{Note on a partition limit theorem for rank and crank}, Bull. Lond. Math. Soc. {\bf 45} (2013), 551--553.

\bibitem{Dys} F. Dyson, {\it Some guesses in the theory of partitions,}
Eureka (Cambridge) {\bf 8} (1944), 10--15.

\bibitem{EL41} P. Erdos and J. Lehner, \emph{The distribution of the number of summands in the partitions of a positive integer}, Duke Math. J. {\bf 8} (1941), 335--345.

\bibitem{FKVY} A. Folsom, C. Ki, Y. Vu, and B. Yang, {\it Strange combinatorial quantum modular forms}, J. Number Theory \textbf{170} (2017), 315--346.

\bibitem{FOR} A. Folsom, K. Ono, and Rob Rhoades,
\newblock {\em Mock theta functions and quantum modular forms},
\newblock {Forum of Mathematics Pi} {\bf 1} (2013), e2.

\bibitem{Ga} F. Garvan,
\newblock {\em New combinatorial interpretations of {R}amanujan's partition
  congruences mod {$5,7$} and {$11$},}
\newblock {Trans. Amer. Math. Soc.} {\bf 305} (1988), 47--77.

\bibitem{Ga10} F. Garvan, \emph{Congruences for Andrews' smallest parts partition function and new congruences for Dyson's rank},
Int. J. Number Theory {\bf 6} (2010), 281--309.

\bibitem{Ga11} F. Garvan, \emph{Higher order spt-functions},
Adv. Math. {\bf 228} (2011), 241--265.

\bibitem{HR} G. Hardy and S. Ramanujan, \emph{Asymptotic Formulaae in Combinatory Analysis}, Proc. London Math. Soc. (2) {\bf 17} (1918), 75--115.

\bibitem{HL} K. Hikami and J. Lovejoy, {\it Torus knots and quantum modular forms}, Res. Math. Sci. \textbf{2} (2015).

\bibitem{Ka} S. Kaavya,
\newblock {\em Crank 0 partitions and the parity of the partition function},
\newblock Int. J. Number Theory {\bf 7} (2011), 793--801.


\bibitem{KKS} B. Kim, E. Kim, and J. Seo, \emph{Asymptotics for $q$-expansions involving partial theta functions},
Discrete Math. {\bf 338} (2015), 180--189.

\bibitem{Mah05} K. Mahlburg, {\it Partition congruences and the
Andrews-Garvan-Dyson crank}, Proc. Natl. Acad. Sci. USA {\bf 102} (2005), 15373--15376.

\bibitem{R} S. Ramanujan, {\it Congruence properties of partitions},
Math. Z. {\bf 9} (1921), 147--153.

\bibitem{rhoades} R.~Rhoades,
\newblock {\em Asymptotics for the number of strongly unimodal sequences},
\newblock {Int. Math. Res. Not.} (2014), 700--719.

\bibitem{Stan89} R. Stanley, \emph{Log-concave and unimodal sequences in algebra, combinatorics, and geometry}, Graph theory and its applications: East and West (Jinan, 1986), 500--535, Ann. New York Acad. Sci. {\bf 576}, New York Acad. Sci., New York, 1989.


\bibitem{szekeres} G. Szekeres, \emph{Asymptotic distribution of the number and size of parts in unequal partitions}, Bull. Austral. Math. Soc. {\bf 36} (1987), 89--97.

\bibitem{Wri33} E. Wright, \emph{Asymptotic partition formulae II. Weighted partitions}, Proc. London Math. Soc. {\bf 36} (1933), 117--141.

\bibitem{Wri68} E. Wright, \emph{Stacks}, Q. J. Math. {\bf 19} (1968), 313--320.

\bibitem{Wri71} E. Wright, \emph{Stacks. II}. Q. J. Math. {\bf 22} (1971), 107--116.

\bibitem{Zag09} D. Zagier, \emph{Ramanujan's mock theta functions and their
 applications [d'apr\'es  Zwegers and Bringmann-Ono] }
Ast\'erisque {\bf 326} (2009), Soc. Math. de France, 143--164.

\bibitem{Za}D.~Zagier,
\newblock {\em Quantum modular forms,}
\newblock In {Quanta of maths}, volume~11 of {\em Clay Math. Proc.},
  659--675. Amer. Math. Soc., Providence, RI, 2010.

\bibitem{ZwegersPhD} S. Zwegers, {\it Mock theta functions}, Ph.D. Thesis, Universiteit Utrecht, 2002.

\bibitem{Zw2} S. Zwegers,
\newblock {\em Mock {M}aass theta functions},
\newblock { Q. J. Math.} {\bf 63} (2012), 753--770.

\end{thebibliography}
\end{document}